\newtheorem{theorem}{Theorem}[section]
\newtheorem{lemma}[theorem]{Lemma}
\newtheorem{proposition}[theorem]{Proposition}
\newtheorem{definition}[theorem]{Definition}
\setlist[enumerate]{itemsep=3pt}
\setlist[itemize]{itemsep=3pt}
\def \TT  {\mathbb{T}} 
\def \RR {\mathbb{R}}  
\def \ZZ {\mathbb{Z}}  
\def \p {\partial}
\def \g {\gamma}
\def \k {\kappa}
\def \ep {\epsilon}
\def \om {\omega}
\def \Om {\Omega}
\def \T {\mathbf{T}}
\def \N {\mathbf{N}}
\numberwithin{equation}{section}
\begin{document}

\title[Uniqueness of $\alpha$-SQG patch]{Eulerian uniqueness of the $\alpha$-SQG patch problem}

\author{Xiaoyutao Luo}

\address{Morningside Center of Mathematics, Academy of Mathematics and Systems Science, Chinese Academy of Sciences, Beijing  100190, China}

\email{xiaoyutao.luo@amss.ac.cn}

\subjclass[2020]{35Q35, 35Q86}

\keywords{$\alpha$-SQG equation, uniqueness,  patches solutions}
\date{\today}

\begin{abstract}
We consider the patch problem of the $\alpha$-SQG equation with $\alpha=0$ being the 2D Euler and   $\alpha= \frac{1}{2}$ the SQG equations respectively.  In the Eulerian setting, we prove  the uniqueness of patch solutions of regularity $W^{2, \frac{1}{1-2\alpha}  +} $ when $0<\alpha< \frac{1}{2}$   and  $C^{1, 4\alpha+ }$  when $0<\alpha< \frac{1}{4} $. The proof is intrinsic to the modified Biot-Savart law and   independent of the local existence of patch solutions.  
\end{abstract}

\date{\today}

\maketitle

 \section{Introduction}\label{sec:intro}
\subsection{The \texorpdfstring{$\alpha$}{a}-SQG equations}

Consider the $\alpha$-SQG equations 
\begin{equation}\label{eq:aSQG}
\begin{cases}
	\p_t \omega + (v \cdot \nabla ) \omega =0 &\\
	v(x, t) =  \nabla^\perp(-\Delta)^{-1+\alpha}\om  &
\end{cases}
\end{equation}
where the relation between the scalar $\om :\RR^2 \times [0,T] \to \RR$ and the velocity $v :\RR^2 \times [0,T] \to \RR^2$ at each time $t$ is determined by the (modified) Biot-Savart law
\begin{equation}\label{eq:aSQG_BS}
\nabla^\perp(-\Delta)^{-1+\alpha}\om  : =  c_\alpha  \int_{\RR^2} \frac{(x-y)^\perp }{|x - y|^{2+2\alpha}} \omega(y, t)\, dy,
\end{equation}
with $x^\perp = (-x_2, x_1)$ for any $x\in \RR^2$.

In this paper, for simplicity, we adjust the Biot-Savart law to have $c_\alpha= 1 ;$ this is equivalent to simple time rescaling.
The key parameter $\alpha \geq 0$ in \eqref{eq:aSQG_BS} dictates the regularity of the velocity field compared to the scalar $\om$. In our setting, $\alpha =0$ corresponds to the 2D Euler equation while $\alpha = \frac{1}{2}$ to the SQG case~\cite{MR1304437}.

In recent years there has been a surge of interest in the analysis of the $\alpha$-SQG equations, particularly its patch problem~ \cite{MR2142632,MR2397460,MR3181769,MR3466846,MR3462104,MR3549626,MR3666567,2110.08615,MR4235799,asqg_nosplash,2112.00191,MR4312192,2402.06364}. We refer readers to references  therein for more interesting aspects of the $\alpha$-SQG patch problem.

\subsection{The \texorpdfstring{$\alpha$}{a}-SQG patch problem}

The main focus of the paper is the uniqueness of the patch dynamics of the $\alpha$-SQG equations. Patch solutions are a special class of weak solutions where characteristic functions of planer domains are transported by the velocity \eqref{eq:aSQG_BS}. 

In stack contrast to the vortex patches of the Euler equations, where the boundary regularity persists globally in time~\cite{MR1235440,MR1270072,MR1207667}, the wellposedness of the patch problem of the $\alpha$-SQG equations has been challenging even the local wellposedness. The first local wellposedness result was by Rodrigo \cite{MR2142632} for the SQG case using a Nash-Moser iteration. Local wellposedness was later extended to the Sobolev setting in works~\cite{MR2397460,MR3666567,MR4235799,2105.10982} since the introduction of contour equations by Gancedo \cite{MR2397460}. See also~\cite{MR3961297,2310.15963,2311.07551,2402.06364} for related questions of long-time behaviors of the contour equations.

While global  existence results for some   classes of  patch solutions have been obtained  \cite{MR3466846,MR3462104,MR3961297,2110.08615,2303.03992}, the global regularity of the general Cauchy problem of $\alpha$-SQG patches remains unknown for all values $\alpha>0$. In a remarkable paper~\cite{MR3549626}, the authors proved finite time singularity for a related model of $\alpha$-SQG, where \eqref{eq:aSQG} is considered in an upper-half plane with patches touching the fixed boundary at all times. This model was revisited in \cite{MR4235799} and more recently \cite{2305.02427,2305.04300}. It remains an outstanding open problem whether the original patch problem  can develop a finite time singularity.

The uniqueness of the $\alpha$-SQG patch  is  of funational importantce yet one subtle  problem. Since the velocity ceases to be Lipschitz, patch solutions  are not expected to be unique within the class of localized and bounded weak solutions (see \cite{MR3987721} nonuniqueness of solutions of the SQG  with negative smoothness). Instead, one aims for uniqueness with the class of patch solutions.

Another caveat in dealing with the uniqueness issue of the patch solutions is whether   the uniqueness of solutions constructed by the contour equations \cite{MR2397460} implies that of the patch problem~\cite{MR3666567,MR3748149}.  In~\cite{MR2397460}, Gancedo modified the tangential velocity and introduced the contour equation that satisfies favorable energy estimates. These estimates in turn give the local existence and uniqueness of the contour equations. In~\cite{MR3666567,MR4235799}, the authors are able to upgrade such uniqueness of contour equations to the patch problem.

We summarize the state-of-the-art uniqueness results for $\alpha$-patches in the next table. A comparison with our results follows shortly.

\begin{table}[h]\renewcommand{\arraystretch}{1.30}
\begin{tabular}{|l|l|l|l|l|}
	\hline
	Uniqueness   &      Regularity & Domain & Reference \\  \hline
	Contour equation & $H^3$   ,$0< \alpha < \frac{1}{2}$    &   $\RR^2 $     &     \cite[Theorem 4.1]{MR2397460}      \\   \hline
	Patch problem    &    $H^3$   ,$0< \alpha < \frac{1}{2}$   &   $\RR^2$ and $\RR\times \RR^+$    &    \cite[Theorem 1.5]{MR3666567}       \\ \hline
	Patch problem    &  $C^{2+}$ ,$\alpha = \frac{1}{2}$     &    $\RR^2$    &    \cite[Theorem 1.2]{MR3748149}       \\ \hline
	Patch problem    & $H^2$  ,$1<\alpha < \frac{1}{6} $     &    $\RR\times \RR^+$    &    \cite[Theorem 1]{MR4235799}       \\ \hline
	Contour equation &  $H^{2+}$  ,$\alpha = \frac{1}{2}$      &    $\RR^2$    &    \cite[Theorem 1.2]{2105.10982}       \\ \hline
\end{tabular}
\label{tab}
\end{table}

\subsection{Main results}
In this paper, we denote $\Omega_t$ for a time-dependent  bounded domain (connected open set) $\Omega \subset \RR^2$  whose boundary is a simple closed curve on some interval $ t\in [0,T]$.

For Sobolev spaces $X = W^{k,p}$ and H\"older spaces $X = C^{k,\beta}$ and an arc-length parameterization $\g $ of $\p \Omega$,   we define
\begin{equation}\label{eq:intro_domain_norm}
\| \Om \|_{X} : = \| \g \|_{ X( [-L/2,L/2] )}  \quad \text{ for $X \in \{ W^{k,p}, C^{k,\beta} \}$ }
\end{equation}
where $ L $ is the length of the boundary $\p \Om$. 

We now introduce the Eulerian notion of a patch solution.
\begin{definition}\label{def:asqg_patch}
Let $\Omega_t \subset \RR^2$ be a time-dependent bounded domain  on $ t\in [0,T]$. We say $\Omega_t$ is a patch solution of $\alpha$-SQG (or an $\alpha$-patch) on $[0,T]$ if the characteristic function $\omega = \chi_{\Omega_t}$ is a weak solution to \eqref{eq:aSQG}--\eqref{eq:aSQG_BS}, i.e. for any $ \varphi \in C^\infty_c(\RR^2 \times [ 0,T ) )$,
\begin{equation}\label{eq:def:asqg_patch}
	\int_{\Omega_0} \varphi(\cdot,0)  \, dx = -\int_{0}^T \int_{\Omega_t}     \p_t \varphi + v \cdot \nabla \varphi   \,dx dt,
\end{equation}
where $v = K_\alpha * \omega$ is given by the Biot-Savart law \eqref{eq:aSQG_BS}.

In addition, for a regularity class $X \in \{ W^{k,p}, C^{k,\beta} \}$ we say $\Omega_t$ is a $X$   patch solution  if  $\sup_{t \in [0,T]} \| \Om_t \|_{X }<\infty$.
\end{definition}

The above Eulerian definition encompasses the previous notions of patch solutions, cf. \cite[p.4]{MR3748149} and \cite[Definition 1.2]{MR3666567}. Since the velocity field is not Lipschitz, it is not clear \emph{a priori} that a notion of a Lagrangian flow exists, and in fact it is only proved very recently in our joint work \cite{asqg} with A. Kiselev for $W^{2,\frac{1}{1 -2\alpha } +}$ patch solutions,  matching the uniqueness threshold obtained here.

We now state the main theorems of the paper.

	\begin{theorem}[$W^{2, \frac{1}{1 -2\alpha } + }$ Uniqueness]\label{thm:main_Sobo}
		Let $0<\alpha < \frac{1}{2}$.  If $  p > \frac{1}{1 -2\alpha } $, then $W^{2, p }$ patch solution of  $\alpha$-SQG (in the sense of Definition \ref{def:asqg_patch}) are unique.
		
		Precisely, if $\Omega_t $ and $\widetilde \Om_t$  are  $W^{2, p }$ two $\alpha$-patches on $[0,T]$ with the same initial data $\Om_0$ (in the sense of Definition \ref{def:asqg_patch}),  then $\Omega_t $ and $\widetilde \Om_t$ must coincide.
	\end{theorem}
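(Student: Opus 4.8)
The plan is to translate the Eulerian weak formulation into an evolution equation for a parametrization of $\p\Omega_t$ and then close a stability estimate for the difference of two such parametrizations, all at the stated regularity. First I would pass from the Eulerian picture to a contour equation. Using \eqref{eq:aSQG_BS} together with the divergence theorem, the velocity restricted to $\p\Omega_t$ equals the absolutely convergent boundary integral
\[
v(\gamma(s,t),t)=c_\alpha\int\frac{\p_{s'}\gamma(s',t)}{|\gamma(s,t)-\gamma(s',t)|^{2\alpha}}\,ds';
\]
here $\Omega_t\in W^{2,p}$ with $p>\tfrac1{1-2\alpha}$ forces $\p\Omega_t\in C^{1,\beta}$ for $\beta:=1-\tfrac1p>2\alpha$, so the chord-arc condition holds, the kernel $|s-s'|^{-2\alpha}$ is integrable, and the integral coincides with the (continuous up to the boundary) trace of $v=K_\alpha*\chi_{\Omega_t}$. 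The weak formulation \eqref{eq:def:asqg_patch} then forces the normal speed of $\p\Omega_t$ to equal $v\cdot n$, and, using the freedom in the tangential reparametrization, one produces a parametrization $\gamma(\cdot,t)$ obeying the \emph{modified contour equation} $\p_t\gamma=v(\gamma)+\lambda\,\p_s\gamma$ with $\lambda$ the explicit Gancedo-type tangential coefficient — chosen so that $|\p_s\gamma|$ is independent of $s$ and, to run the comparison below, so that the two solutions carry a common $s$-independent speed. All of this is a \emph{consequence} of the weak formulation and the assumed regularity, so no local existence theory for the contour equation is used.

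Next I would estimate the difference. Let $\gamma,\widetilde\gamma$ be the two so-parametrized boundaries with $\gamma(\cdot,0)=\widetilde\gamma(\cdot,0)$, and set $\delta=\gamma-\widetilde\gamma$, so that $\p_t\delta=\bigl[v_\Omega(\gamma)-v_{\widetilde\Omega}(\widetilde\gamma)\bigr]+\bigl[\lambda\,\p_s\gamma-\widetilde\lambda\,\p_s\widetilde\gamma\bigr]$. An integration by parts in $s'$ turns the boundary velocity into a constant multiple of
\[
\int\frac{(\gamma(s')-\gamma(s))\,\bigl[(\gamma(s')-\gamma(s))\cdot\p_{s'}\gamma(s')\bigr]}{|\gamma(s)-\gamma(s')|^{2\alpha+2}}\,ds',
\]
which involves only $\gamma,\p_s\gamma$ (no second derivative) and has the integrable near-diagonal profile $|s-s'|^{-2\alpha}$. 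Telescoping the difference of the two such integrals yields (i) a ``numerator'' contribution carrying $\p_{s'}\delta$, whose near-diagonal part is a scalar multiple of $\p_s\gamma(s)\,\bigl[\p_s\gamma(s)\cdot\p_s\delta(s)\bigr]$, and (ii) contributions carrying $\delta$ itself against kernels of order at most $1+2\alpha$.

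For (i) the key is the algebraic identity $\p_s\gamma\cdot\p_s\delta=\tfrac12\bigl(|\p_s\gamma|^2-|\p_s\widetilde\gamma|^2\bigr)+\tfrac12|\p_s\delta|^2$: since the parametrizations are chosen with $|\p_s\gamma|=|\p_s\widetilde\gamma|$, this term is quadratic in $\p_s\delta$ and hence harmless (after interpolation it is controlled by $\|\delta\|_{L^\infty}\|\delta\|\,M$), while any residual purely tangential piece is cancelled by the reparametrization-difference term $\lambda\,\p_s\gamma-\widetilde\lambda\,\p_s\widetilde\gamma$; this is the sense in which the argument is intrinsic to the modified Biot--Savart law. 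For (ii) the hypothesis $p>\tfrac1{1-2\alpha}$ enters: recalling $\beta=1-\tfrac1p>2\alpha$ and $\p_s\gamma\cdot n=0$, one has $|(\gamma(s)-\gamma(s'))\cdot n(s)|\lesssim|s-s'|^{1+\beta}$, and this normal cancellation is exactly what renders the order-$(1+2\alpha)$ singular integrals bounded on the norm in which $\delta$ is measured.

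Collecting (i), (ii) and the (easier) bound for $\lambda\,\p_s\gamma-\widetilde\lambda\,\p_s\widetilde\gamma$ — $\lambda$ being a benign explicit functional of $\gamma$ and $v$ — one reaches $\tfrac{d}{dt}\|\delta(t)\|\le C(M)\,\|\delta(t)\|$ in the chosen low-regularity norm, with $M=\sup_{t}\bigl(\|\Omega_t\|_{W^{2,p}}+\|\widetilde\Omega_t\|_{W^{2,p}}\bigr)$; since $\delta(0)=0$, Gr\"onwall gives $\delta\equiv0$ on $[0,T]$, i.e. $\Omega_t=\widetilde\Omega_t$. I expect the crux to be the analysis of the velocity difference: because the velocity of an $\alpha$-SQG patch is only $C^{1-2\alpha}$ and is not Lipschitz on $\p\Omega_t$, a naive Gr\"onwall is unavailable and the apparent loss of a derivative there has to be removed — partly by the gauge-induced quadratic structure and partly by the near-diagonal cancellations, which become summable only when the curvature is H\"older of exponent exceeding $2\alpha$, i.e. at $p>\tfrac1{1-2\alpha}$ (hence the strict inequality). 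A secondary but genuine technical point is to justify, at this low regularity, the boundary-integral representation of $v$, its continuity up to $\p\Omega_t$, and the passage from the weak formulation to the modified contour equation.
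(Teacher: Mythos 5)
Your overall architecture (parametrize the boundary, derive a contour-type evolution, close an $L^2/H^1$ stability estimate, apply Gr\"onwall) matches the paper's, but both load-bearing steps of your proposal have genuine gaps, and in both places the paper does something different. First, the passage from Definition \ref{def:asqg_patch} to any contour dynamics is not a ``secondary technical point'': the Eulerian definition imposes no time-regularity on $t\mapsto\Omega_t$, so the assertion that the weak formulation ``forces the normal speed of $\p\Omega_t$ to equal $v\cdot n$'' presupposes that a normal speed exists, which is exactly what must be proved when $v$ is not Lipschitz. The paper devotes Section \ref{sec:flow} to this, importing the flow wellposedness of \cite{asqg} (Theorem \ref{thm:flow_existence}) for the \emph{unmodified} Lagrangian flow $\p_t\g=v(\g)$ rather than a Gancedo-type gauged contour equation, and then still needs the bootstrap of Lemma \ref{lemma:estimate_dvT_W2p} (only the tangential component $\p_s v\cdot\T$ is $C^{1-1/p}$, whence $g$ and the flow retain $C^{1,1-1/p}$ regularity) before the stability result (Theorem \ref{thm:uniqueness_by_flow}, which requires flow regularity $C^{1,\beta}$ with $\beta>2\alpha$) applies; without that bootstrap the flow is only $C^{1,1-1/p-2\alpha}$ and one would need $p>1/(1-4\alpha)$. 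A smaller but real error: your normalization cannot give the two solutions ``a common $s$-independent speed,'' since the uniform speeds are $L_1(t)/2\pi$ and $L_2(t)/2\pi$ and the two lengths need not agree for $t>0$; hence $\tfrac12(|\p_s\g|^2-|\p_s\widetilde\g|^2)$ does not vanish and must itself enter the stability estimate.

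Second, your cancellation mechanism for the derivative-loss term (i) is not justified as stated. That contribution carries $\p_{s'}\delta(s')$ at the \emph{integration} variable against an odd kernel of order $|s-s'|^{-1-2\alpha}$; after testing, it is a genuine bilinear form of the type $\iint \Delta[\T](s')\,K(s',s)\,\Delta[\T](s)\,ds'\,ds$, not a pointwise expression $\p_s\g(s)\bigl[\p_s\g(s)\cdot\p_s\delta(s)\bigr]$, so the identity $\p_s\g\cdot\p_s\delta=\tfrac12|\p_s\delta|^2+\mathrm{const}$ cannot be applied to it, and ``interpolation'' does not recover the lost derivative. The paper's substitute is the symmetrization of this bilinear form combined with the oddness bound \eqref{eq:lemma:kernel_1} of Lemma \ref{lemma:kernel}, $|K(y,x)+K(x,y)|\lesssim|x-y|^{-1+\beta-2\alpha}$, integrable precisely when $\beta>2\alpha$, i.e.\ $p>1/(1-2\alpha)$ --- this, not a tangential gauge, is where the threshold enters. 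Your normal-cancellation observation in (ii) is sound and corresponds to Lemma \ref{lemma:arc_length_estimates} and the kernel bound \eqref{eq:lemma:kernel_2}, but by itself it does not close the estimate.
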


By comparison, we lower the regularity from $H^3   $ of \cite{MR3666567}   to   $W^{2, \frac{1}{1-2\alpha} + }$ and also extend the uniqueness statement to the Eulerian setting. As a corollary, the   $H^2$-patch solutions constructed by  \cite[Theorem 3]{MR4235799} are unique in the Eulerian setting when   $0< \alpha < \frac{1}{4}$.

Our second uniqueness result concerns patch solutions with rougher boundaries for smaller values of $\alpha>0$. In fact,  Theorem \ref{thm:main_Holder} below   provides the uniqueness of the patch problem  with less than two total derivatives for the first time.

	\begin{theorem}[$C^{1,4\alpha + }$ Uniqueness]\label{thm:main_Holder}
		Let $0<\alpha < \frac{1}{4}$. If $  \beta >4\alpha $, then $C^{1, \beta }$ patch solution of  $\alpha$-SQG (in the sense of Definition \ref{def:asqg_patch}) are unique.
	\end{theorem}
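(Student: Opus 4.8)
The plan is to prove Theorem \ref{thm:main_Holder} by the same stability-estimate strategy used for Theorem \ref{thm:main_Sobo}, tracking carefully how the H\"older regularity $C^{1,\beta}$ with $\beta > 4\alpha$ enters the key nonlinear estimates. Let $\Omega_t$ and $\widetilde\Omega_t$ be two $C^{1,\beta}$ patch solutions with the same initial data. Following the Eulerian approach intrinsic to the modified Biot-Savart law advertised in the abstract, I would introduce a single scalar quantity measuring the discrepancy of the two patches --- the natural candidate being (a suitable norm of) the area of the symmetric difference $\Omega_t \triangle \widetilde\Omega_t$, or equivalently a transport distance between $\chi_{\Omega_t}$ and $\chi_{\widetilde\Omega_t}$ --- and derive a Gr\"onwall-type inequality for it. The weak formulation \eqref{eq:def:asqg_patch} gives, after subtracting the two equations and testing against a well-chosen family of test functions adapted to the level sets of the signed distance to $\partial\Omega_t$, a bound of the form $\frac{d}{dt}\,\mathrm{(discrepancy)} \le C(t)\,\mathrm{(discrepancy)}$, where $C(t)$ depends only on $\sup_t(\|\Omega_t\|_{C^{1,\beta}} + \|\widetilde\Omega_t\|_{C^{1,\beta}})$; since the discrepancy vanishes at $t=0$, uniqueness follows.

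The heart of the matter is the commutator/stability estimate for the velocity. Writing $v - \widetilde v = K_\alpha * (\chi_{\Omega_t} - \chi_{\widetilde\Omega_t})$, one must control this difference in a norm (roughly $L^\infty$ along the boundary, or a weak norm dual to the discrepancy functional) by the discrepancy itself, with a constant governed by the $C^{1,\beta}$ bound. The kernel $\frac{(x-y)^\perp}{|x-y|^{2+2\alpha}}$ is singular of order $1+2\alpha$, so a naive bound loses; the standard device --- already used for the contour equation --- is to exploit the geometry of the patch boundary (the tangential direction is almost orthogonal to $x-y$ at short scales) to gain one order of cancellation, which is exactly where $C^1$ regularity of $\gamma$ is needed to make sense of the principal-value structure and where the H\"older exponent $\beta$ is spent to estimate the remaining singular integral: near-diagonal contributions like $\int |x-y|^{\beta}|x-y|^{-1-2\alpha}\,d(\text{arc length})$ converge precisely when $\beta > 2\alpha$, and the worse term coming from differentiating the kernel in the stability setting is what pushes the requirement to $\beta > 4\alpha$. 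I would make this quantitative via the bilinear structure: split $\partial\Omega_t \triangle \partial\widetilde\Omega_t$, parametrize both boundaries over a common interval, Taylor expand the kernel, and isolate the terms proportional to $\gamma - \widetilde\gamma$ versus those proportional to $\gamma' - \widetilde\gamma'$; the latter are the dangerous ones and require the full strength of $\beta > 4\alpha$ together with an interpolation absorbing a fractional derivative of the difference back into the $C^{1,\beta}$ a priori bound.

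Concretely the steps are: (i) set up the Eulerian discrepancy functional and derive its evolution from \eqref{eq:def:asqg_patch} using test functions built from the signed distance to the boundary, so that only the normal component of $v - \widetilde v$ on the moving interface enters; (ii) reduce the normal velocity difference on $\partial\Omega_t$ to a boundary integral of the Biot-Savart kernel against $\gamma - \widetilde\gamma$ after the tangential-cancellation trick; (iii) prove the key estimate $\|(v-\widetilde v)\cdot n\|_{L^\infty(\partial\Omega_t)} \lesssim \|\gamma - \widetilde\gamma\|_{?}$ with the constant depending on the $C^{1,\beta}$ norms, carefully checking convergence of the near-diagonal integrals under $\beta > 4\alpha$; (iv) close the Gr\"onwall loop, relating $\|\gamma-\widetilde\gamma\|$ back to the geometric discrepancy via the regularity bound and the fact that both curves stay uniformly non-self-intersecting (an arc-chord condition propagated by the $C^{1,\beta}$ bound and the transport structure). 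I expect step (iii) --- specifically, extracting enough cancellation from the most singular term after differentiating the kernel so that only $\beta > 4\alpha$ rather than $\beta > $ some larger exponent is required --- to be the main obstacle; the rest is a matter of bookkeeping and reusing the machinery already developed for the $W^{2,p}$ case.
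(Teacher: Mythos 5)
Your proposed route is genuinely different from the paper's, and it contains a gap that I do not believe can be repaired in the form you describe. The paper does not run a Gr\"onwall argument on an Eulerian discrepancy functional (the area of $\Omega_t\triangle\widetilde\Omega_t$ or a transport distance between the characteristic functions). The obstruction is the one flagged in the introduction: for $\alpha>0$ the patch velocity is only H\"older continuous, not log-Lipschitz, so the natural bound for the velocity difference in terms of the symmetric difference is \emph{sublinear} --- schematically $\|v-\widetilde v\|_{L^\infty}\lesssim A^{\theta}$ with $\theta<1$, where $A=|\Omega_t\triangle\widetilde\Omega_t|$ --- and the resulting differential inequality $\dot A\lesssim A^{\theta}$ does not force $A\equiv0$ from $A(0)=0$. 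This is exactly why Yudovich-type arguments stop at $\alpha=0$. Your step (iii), the estimate $\|(v-\widetilde v)\cdot n\|_{L^\infty}\lesssim\|\gamma-\widetilde\gamma\|_{?}$, presupposes that the two boundaries are parameterized over a common label in a dynamically consistent way; in the purely Eulerian setting of Definition~\ref{def:asqg_patch} no such parameterization is given, and constructing one is the actual content of the paper's Section~\ref{sec:flow}.

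What the paper does instead is: (1) prove (Theorem~\ref{thm:flow_existence}) that a $C^{1,\beta}$ patch admits a Lagrangian flow $\gamma$ solving $\partial_t\gamma=v(\gamma)$, but only with regularity $C^{1,\beta-2\alpha}$, i.e.\ there is a loss of $2\alpha$ in passing from the patch to its flow; (2) prove an $L^2$ stability estimate (Theorem~\ref{thm:uniqueness_by_flow}) for the differences $\T_1-\T_2$, $g_1-g_2$, $\gamma_1-\gamma_2$ of the tangent vectors, arc-length metrics and trajectories of two flows started from the \emph{same} initial parameterization $\gamma_0$, valid whenever the flows are $C^{1,\sigma}$ with $\sigma>2\alpha$. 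The Gr\"onwall inequality there is linear, not sublinear, because the most singular quadratic term is symmetrized and controlled by the near-oddness of the kernel, $|K_i(y,x)+K_i(x,y)|\lesssim|x-y|^{-1+\sigma-2\alpha}$ (Lemma~\ref{lemma:kernel}), with the remaining terms handled by maximal-function bounds. Composing the two requirements --- flow regularity $\beta-2\alpha$ from step (1) and threshold $\sigma>2\alpha$ from step (2) --- yields $\beta>4\alpha$. Your attribution of the $4\alpha$ to ``the worse term coming from differentiating the kernel in the stability setting'' misses this structure: the exponent is the sum of a regularity loss in the flow construction and a separate threshold in the stability estimate, and neither half is visible in a purely Eulerian formulation.
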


Note that  in  \cite{MR3748149}, the authors prove uniqueness of patches of SQG  (i.e. $\alpha = \frac{1}{2}$) via an ingenious reparameterization process, for any contour $\g \in C_t C^{2,0+} \cap C^1_t   C^{1}$. Even though our main results (i.e. $\alpha < \frac{1}{2}$) are not directly comparable to that of \cite{MR3748149},  in the setting of Definition \ref{def:asqg_patch} we do not require this kind of smoothness--- it is not clear a priori that  such parameterizations exist in the setting of Definition \ref{def:asqg_patch}.

\subsection{Ideas}

\usetikzlibrary{shapes.geometric, arrows}

\tikzstyle{rec} = [rectangle, rounded corners, 
minimum width=3cm, 
minimum height=0.8cm,
text centered, 
draw=black]
 
\tikzstyle{arrow} = [thick,->,>=stealth]
 
\begin{figure}[h]
\begin{tikzpicture}[node distance=7cm]

\node (node1) [rec] {Eulerain patches $\Omega_{i,t}$};
\node (node2) [rec, right of=node1] {Lagrangian flows $\p_t \g_i = v_i(\g_i ) $ of $\p \Omega_{i,t}$};
\node (node3) [rec, below of=node1, yshift=5.5cm] {Stability of $ \T_1 - \T_2  $ and $ g_1 - g_2 $};
\node (node4) [rec, right of=node3] {Dynamics of $g_i  = |\g_i | $   and $\T_i = |g_i |^{-1} \p_x \g_i $};

\draw [arrow] (node1) -- (node2);
\draw [arrow] (node2) -- (node4);
\draw [arrow] (node4) -- (node3);

\end{tikzpicture}
\end{figure}

Unlike the previous works~\cite{MR2397460,MR3666567,MR4235799,2105.10982}, we do use Gancedo's contour equations. Instead, we follow the Lagrangian framework developed in \cite{asqg}.  The proof of both theorems relies on the stability estimates of two patch solutions in terms of their tangent vector and arc-length metric.

The first step of the process is to deduce the wellposedness of the flow from the Eulerian setting of Definition \ref{def:asqg_patch} as in \cite{asqg}. Then this Lagrangian flow allows us to study the dynamics of the patch boundary, tracking the evolution of tangent vector and arc length. The stability estimates in turn rely on the dynamics of the trajectories of the patch boundaries.

One key ingredient is  certain odd symmetry in the evolution of both the tangent vector and the arc length, similar to the curvature equations used in \cite{asqg}. Once the patch solutions have suitable regularity, the odd symmetry can be used to deduce $L^2$-stablity of the tangent vector and arc-length metric. We refer to Section \ref{sec:stablity} for more details.

\subsection{Plan of the paper}

The rest of the paper is organized as follows.
\begin{itemize}

\item After necessary preliminaries in Section \ref{sec:prelim}, we introduce and then prove the wellposedness of the flow of patch solutions in Section \ref{sec:flow}.

\item The main uniqueness theorems are proved in   Section \ref{sec:proof} assuming the key stability estimate, Theorem \ref{thm:uniqueness_by_flow}, whose proof is the   content of Section \ref{sec:stablity}.

\end{itemize}

\subsection*{Acknowledgments}

The author is grateful to A. Kiselev for stimulating discussions.

\section{Preliminaries}\label{sec:prelim}

\subsection{Notations}

Functions on the torus $\TT = \RR / 2\pi  \ZZ $ are identified with $2\pi$-periodic functions on $\RR$. Similarly, if $f$ is $L$-periodic for some $L>0$, we consider it a function on the rescaled torus $\TT = \RR / L\ZZ $.

We use $|f|_{L^{ p} }$,$|f|_{W^{k,p} }$ and $|f|_{C^{k,\beta}}$ to denote various   norms  over various intervals, such as $[-L/2, L/2 ]$ for the arc-length parameterizations,  without spelling out the specific spatial domain.

Throughout the paper, $X \lesssim Y$ means $X \leq CY$ for some constant $C>0$ that may change from line to line. Similarly, $ X \gtrsim Y$ means $X \geq C Y$ and $X \sim Y$ means $ X \lesssim Y$, and $X \gtrsim Y$ at the same time.

We also use the following big-O notation: $X = O(Y)$ for a quantity $X$ such that $|X| \leq C Y$ for some absolute constant $C>0$ and object $Y>0$.

\subsection{Planar curves and domains}

The boundary of a  $C^1$ patch (a simply-connected bounded domain) is a  $C^1$ curve, the image of a $C^1$ periodic map  $\gamma : \RR \to \RR^2$ such that $ |\g'|>0$. Here $\g$ is also called  a parametrization  and if $ |\g'| = 1 $, we say it is an arc-length parametrization. We always assume the parameterization is counterclockwise oriented.

For a Banach space $X \subset C^1 $, we say a curve is of class $X$ if its arc-length parameterizations are of class $X$. Given a curve, we write $\T$ as the unit tangent vector and $\N = - \T^\perp$ the outer unit normal vector. The letter $g$ is reserved for the metric $g = |\p_x \g |$ of a parameterization.  

Throughout the paper, we consider curves and domains that are time-dependent.  In this case, $\p \Omega_t$ refers to the boundary   of $\Omega_t$ at each time $t$.

\subsection{Rough parametrizations of  regular domains}\label{subsec:rough_gamma_regular_Omega}

In what follows, we often consider parameterizations that are less regular than the curve itself.  For example when $t\mapsto \theta(t)$ is only $ C^{1+}$ and monetone, the map $t \mapsto (\sin(\theta(t)) ,  \cos(\theta(t)) )$ provides a $ C^{1+}$ parameterization of the unit circle in $\RR^2$.

For example, let $\Omega$ be a $C^{1,\beta }$ bounded domain and let $\g \in C^{1,\sigma}(\TT)$ be a parametrization of $\p \Omega$ for some $\sigma>0$. In this case, the tangent $\T(x)$ and normal $\N(x)$ in the parameterization $\g$ (namely defined by the corresponding objects at point $\g(x) $ for any $x \in \TT$) satisfy  $\T, \N \in C^{0,\beta }(\TT)$, the expected regularity.

\subsection{Estimates for \texorpdfstring{$C^{1, \beta  }$}{C1b}  and \texorpdfstring{$W^{2, p  }$}{W2p} curves}\label{subsec:curves_estimates}

By definition, in the arc-length parameterization, a $C^{1 ,\beta}$ curve $\g$ satisfies the usual bounds
\begin{align*}
 \T(s) \cdot \N(s')  & = O(|s-s'|^\beta) \\
 \T(s) \cdot \T(s')  & = 1+ O(|s-s'|^\beta)  . 
\end{align*}

However, when the curve is $W^{2,p}$, we get improved bounds on certain quantities due to the $L^p$-integrable curvature, such as \eqref{eq:arc_length_a}, \eqref{eq:arc_length_c}, \eqref{eq:arc_length_d}, and \eqref{eq:arc_length_e}  below.

Given any $L$-periodic function $f:\RR \to \RR   $,  we denote by $\mathcal{M}f: \RR \to \RR$ the ($L$-periodic) maximal function of $f$,
\begin{equation}\label{eq:def_maximal_function}
\mathcal{M}f(x ) = \sup_{0< \ep < 2 L }\frac{1}{2\ep}\int_{ x-\ep}^{x + \ep} |f ( y )| \, d y .
\end{equation}
The restriction $\ep < 2 L $ is non-essential and the boundedness of periodic $\mathcal{M}$  on $L^p$ for $1<p\leq \infty$ is also standard  \cite{MR0290095}.

\begin{lemma}\label{lemma:arc_length_estimates}
Let  $\Om $ be a $W^{2,p}$ domain for some $1< p \leq \infty$ and set $\beta = 1 -\frac{1}{p}$. Let $\g $ be an arc-length parameterization of $ \p\Om$. For  any $s,  s' \in \RR$, we have
\begin{subequations}
\begin{align} 
\T(s)  \cdot \T( s' ) &= 1+ O(  |s - s'  |^{2\beta }) \label{eq:arc_length_a}\\
(\g(s) - \g( s' )) \cdot \N(s) &= O(  |s - s'  |^{1+\beta })  \label{eq:arc_length_b} \\
(\T(s) - \T( s' )) \cdot \T(s) &= O(  |s - s'  |^{2\beta }) \label{eq:arc_length_c} \\
\frac{ |s -s'  | }{ |\g(s) - \g( s' )| }  &= 1+ O(  |s - s' |^{2\beta }) \label{eq:arc_length_d} \\
(\g(s) - \g( s' )) \cdot \T(s) &= (s - s'  )+ O(  |s - s'  |^{1+ 2 \beta }) \label{eq:arc_length_e}
\end{align}
\end{subequations}
and the maximal estimates
\begin{subequations}
\begin{align}
\T( s' ) \cdot \N(s) & =  O(  \mathcal{M}\k ( s ) |s - s'  | ) \label{eq:arc_length_M_a} \\    
(\g(s) - \g( s' )) \cdot \N(s) &= O(  \mathcal{M}\k ( s ) |s - s'  |^{2})  \label{eq:arc_length_M_b} \\
  \T(s)  \cdot \left[\T(s) -\T( s' )  \right] &= O(  \mathcal{M}\k ( s ) |s - s'  |^{1+\beta}) \label{eq:arc_length_M_c}\\
\left[(\g(s) - \g( s' ) \right]\cdot \left[\T(s) -\T( s' )  \right] &= O(   \mathcal{M}\k (s ) |s - s'  |^{2+\beta}) \label{eq:arc_length_M_d}.
\end{align}

\end{subequations}

\end{lemma}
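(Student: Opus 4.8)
The plan is to establish all seven estimates by working locally near the diagonal $s = s'$ and exploiting the fundamental theorem of calculus in the arc-length parameterization, where $\p_s \g = \T$ and $\p_s \T = \k \N$ with $\k \in L^p$. Set $h = s - s'$ and assume $|h|$ is small (the estimates for $|h|$ bounded below follow trivially from boundedness of the curve, since all quantities on the right are then comparable to constants). Write $\T(s) - \T(s') = \int_{s'}^s \k(u) \N(u)\, du$ and $\g(s) - \g(s') = \int_{s'}^s \T(u)\, du$; these two identities, together with Hölder's inequality $\int_{s'}^s |\k| \le |h|^{1-1/p} |\k|_{L^p} = |h|^\beta |\k|_{L^p}$ and the elementary pointwise bound $|\mathcal{M}\k(s)| \ge \frac{1}{|h|}\int_{s'}^s |\k|$ (which is the whole point of introducing the maximal function), are the engine behind everything.

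First I would prove the basic Hölder-type bounds \eqref{eq:arc_length_a}--\eqref{eq:arc_length_c}. For \eqref{eq:arc_length_a}, note $1 - \T(s)\cdot\T(s') = \tfrac12 |\T(s) - \T(s')|^2$ since both are unit vectors, and $|\T(s) - \T(s')| \le \int_{s'}^s |\k| \lesssim |h|^\beta$, giving $O(|h|^{2\beta})$. Estimate \eqref{eq:arc_length_c} is the same computation with the identity $\T(s)\cdot(\T(s) - \T(s')) = \tfrac12|\T(s) - \T(s')|^2$. For \eqref{eq:arc_length_b}, expand $(\g(s) - \g(s'))\cdot\N(s) = \int_{s'}^s (\T(u) - \T(s))\cdot \N(s)\,du$ (using $\T(s)\cdot\N(s) = 0$), and bound $|\T(u) - \T(s)| \lesssim |u - s|^\beta \le |h|^\beta$ to get $O(|h|^{1+\beta})$. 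For \eqref{eq:arc_length_e}, write $(\g(s) - \g(s'))\cdot\T(s) = \int_{s'}^s \T(u)\cdot\T(s)\,du = \int_{s'}^s \big(1 + O(|u-s|^{2\beta})\big)\,du = h + O(|h|^{1+2\beta})$ using \eqref{eq:arc_length_a}. Estimate \eqref{eq:arc_length_d} then follows from \eqref{eq:arc_length_b} and \eqref{eq:arc_length_e} by Pythagoras: $|\g(s) - \g(s')|^2 = ((\g(s)-\g(s'))\cdot\T(s))^2 + ((\g(s)-\g(s'))\cdot\N(s))^2 = h^2(1 + O(|h|^{2\beta}))$, so $|h|/|\g(s)-\g(s')| = (1 + O(|h|^{2\beta}))^{-1/2} = 1 + O(|h|^{2\beta})$.

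Next the maximal-function refinements, which replace one factor of $|h|^\beta$ by the sharper $\mathcal{M}\k(s)|h|$. For \eqref{eq:arc_length_M_a}: $\T(s')\cdot\N(s) = -(\T(s) - \T(s'))\cdot\N(s)$, and $|\T(s) - \T(s')| \le \int_{s'}^s|\k| \le |h|\,\mathcal{M}\k(s)$ (using the maximal function centered at $s$; one must be slightly careful that the interval of integration lies within a symmetric interval about $s$ of radius $|h|$, which it does). For \eqref{eq:arc_length_M_b}: as in \eqref{eq:arc_length_b}, $(\g(s) - \g(s'))\cdot\N(s) = \int_{s'}^s(\T(u) - \T(s))\cdot\N(s)\,du$, and $|\T(u) - \T(s)| \le \int_u^s|\k| \le |h|\,\mathcal{M}\k(s)$, so the integral is $O(|h|^2\,\mathcal{M}\k(s))$. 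For \eqref{eq:arc_length_M_c}: $\T(s)\cdot(\T(s) - \T(s')) = \tfrac12|\T(s) - \T(s')|^2 \le \tfrac12\big(\int_{s'}^s|\k|\big)\cdot\big(\int_{s'}^s|\k|\big) \le \tfrac12\,(|h|\mathcal{M}\k(s))\cdot(|h|^\beta|\k|_{L^p})$, bounding one factor by the maximal function and the other by Hölder, yielding $O(\mathcal{M}\k(s)|h|^{1+\beta})$. For \eqref{eq:arc_length_M_d}: write $[\g(s) - \g(s')]\cdot[\T(s) - \T(s')]$; by Cauchy--Schwarz this is $\le |\g(s) - \g(s')|\cdot|\T(s) - \T(s')| \lesssim |h|\cdot\big(\int_{s'}^s|\k|\big)$, but that only gives $|h|^{1+\beta}$, so instead I would split $\g(s) - \g(s') = h\,\T(s) + O(|h|^{1+\beta})$ (from \eqref{eq:arc_length_b}, \eqref{eq:arc_length_e}) and pair: the main term is $h\,\T(s)\cdot(\T(s) - \T(s'))$, controlled by \eqref{eq:arc_length_M_c} as $|h|\cdot O(\mathcal{M}\k(s)|h|^{1+\beta}) = O(\mathcal{M}\k(s)|h|^{2+\beta})$, and the error term contributes $O(|h|^{1+\beta})\cdot|\T(s) - \T(s')| = O(|h|^{1+\beta})\cdot O(|h|\mathcal{M}\k(s)) = O(\mathcal{M}\k(s)|h|^{2+\beta})$, as desired.

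The main subtlety — more bookkeeping than genuine obstacle — is handling the periodicity and the global-versus-local distinction: all of these are statements for \emph{all} $s, s' \in \RR$, but the arguments above are local in $|s - s'|$. The resolution is that the diameter of $\p\Om$ and its length $L$ are fixed finite quantities, so for $|s - s'| \gtrsim 1$ (say comparable to $L$) every left-hand side is trivially $O(1) = O(|s-s'|^{2\beta})$ etc., absorbing into the implicit constant, while $\mathcal{M}\k(s)|s - s'| \gtrsim \mathcal{M}\k(s)\cdot L \gtrsim \tfrac1L\int_{\TT}|\k| \gtrsim$ (total curvature, which is $\ge 2\pi$ for a simple closed curve), so the maximal bounds also hold trivially in that regime. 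One should also verify that $\beta = 1 - 1/p \in (0,1)$ so that $2\beta$ can exceed $1$ precisely when $p > 2$, which is consistent with \eqref{eq:arc_length_e} genuinely improving on the naive estimate only in that range; the statements as written hold for all $1 < p \le \infty$ without needing such a restriction since they are phrased as upper bounds on error terms.
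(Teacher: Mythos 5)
Your proposal is correct and follows essentially the same route as the paper, which only sketches this argument (fundamental theorem of calculus plus the H\"older bound $\int_{s'}^{s}|\k|\lesssim |s-s'|^{\beta}$ for the first group, and the definition of $\mathcal{M}\k$ for the second); your write-up simply supplies the details, including the useful identity $1-\T(s)\cdot\T(s')=\tfrac12|\T(s)-\T(s')|^2$ and the splitting of one factor by the maximal function and the other by H\"older in \eqref{eq:arc_length_M_c}--\eqref{eq:arc_length_M_d}. The only caveat, which affects the statement rather than your proof, is that \eqref{eq:arc_length_d} must implicitly be read for $|s-s'|$ measured on the torus (otherwise the left-hand side degenerates at $|s-s'|=L$), a point you already acknowledge in your discussion of the global regime.
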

\begin{proof}
In the first group of estimates, \eqref{eq:arc_length_a},\eqref{eq:arc_length_b},\eqref{eq:arc_length_c}, and \eqref{eq:arc_length_e} follow from the basics bound $ |\T (s)\cdot \N (s') | \lesssim |s -s'|^{\beta }$ and the fundamental theorem of calculus. With these bounds, a Taylor expansion argument applied to $\frac{ |s -s'  |^2 }{ |\g(s) - \g( s' )|^2 }$ then implies \eqref{eq:arc_length_d}.

The second group of estimates \eqref{eq:arc_length_M_a}--\eqref{eq:arc_length_M_d} are a direct consequence from the definition \eqref{eq:def_maximal_function}.

\end{proof}
\subsection{The Biot-Savart law for \texorpdfstring{$\alpha$}{a}-patches}

Let  $\Om$ be a $C^{1}$ domain and $\g$ be a $C^1(\TT)$ (counter-clockwise) parameterization of  $\p \Om$. The velocity field $v$ on $\p \Omega$ given by the Biot-Savart \eqref{eq:aSQG_BS} satisfies
\begin{equation}\label{eq:aux_BS_gamma}
v (x) =    -\frac{1}{2\alpha } \int_{\TT} \frac{\T(y)}{|\g(x) - \g(y)|^{2\alpha }} g (y)\, dy ,
\end{equation}
where $ g = |\p_x \g |$ is the arc-length of $\g$.

If the parameterization is arc-length, we instead use the labels $s,s'$ and
\begin{equation}\label{eq:aux_BS_arc-length_gamma}
v (s) =     -\frac{1}{2\alpha } \int_{\g} \frac{\T(s')}{|\g(s) - \g(s')|^{2\alpha }}  \, ds' .
\end{equation}
Here, the velocity $ v$ is   considered a function on the patch boundary $\p \Om$, a convention that we will follow often in what follows.

\section{Wellposedness of the flow of \texorpdfstring{$C^{1,\beta}$}{C1b}  patches }\label{sec:flow}

The general notion of   patch solutions  used in Definition \ref{def:asqg_patch} provides little information on how the patches are evolving. To prove their uniqueness, we first pass to the Lagrangian setting and use the particle trajectories to study  the dynamics of the patch boundary. 

\subsection{The flow of \texorpdfstring{$\alpha$}{a}-patches}\label{subsec:flow_asqg_patches} 

Among all the possible parameterizations of a patch boundary, there is a ``canonical'' one, evolving according to the Biot-Savart law \eqref{eq:aSQG_BS}. This is   introduced in \cite{asqg} as the flow of a patch solution.

\begin{definition}
Let $ 0< \alpha < \frac{1}{2}$ and $\Omega_t$ be an $\alpha$-patch on $[0,T]$. We say $\g:\TT\times [0,T] \to \RR^2$ is a flow of the patch $\Omega_t$ if
\begin{itemize}
	\item The map $ x  \mapsto \g( x ,t)$ is a  parameterization (i.e. $|\g'|>0$) of $\p \Om_t$ for every $t\in [0,T]$;
	\item The flow equation $\p_t \g = v(\g,t )$ holds on $\TT \times [0,T]$, where $v(x,t) :\RR^2 \times [0,T] \to \RR^2$ is the velocity field generated by the patch $\Omega_t$.
\end{itemize}

If in addition for some $\sigma>0$, $\g(\cdot, t) $ is of class $C^{1,\sigma}$ for all $t\in [0,T]$ (namely $ \g \in L^\infty( [0,T] ;C^{1,\sigma}(\TT) )$), we say $\g$ is a $C^{1,\sigma}$ flow of $\Omega_t$.
\end{definition}

The existence of a flow with the same regularity as that of the patch solution is generally not obvious. In general, one expects a loss of regularity of the corresponding flow compacted to the patch solution itself, as demonstrated in  \cite{asqg}.

We will use the main result Theorem 3.11 from \cite[Section 3]{asqg}:
\begin{theorem}[{\cite[Theorem 3.11]{asqg}}]\label{thm:flow_existence}
Let $0< \alpha< \frac{1}{2}$ and    $\beta >  2\alpha $. Suppose  $\Omega_t$ is a $C^{1,\beta}$ $\alpha$-patch with the initial data $\Omega_0$ and $v =K_\alpha  *  \chi_{\Omega_t}:\RR^2\times [0,T] \to \RR^2$ is the velocity field of $\Omega_t$.

For any initial parametrization of $ \p \Omega_0$, $\g_0\in C^{1,\beta}(\TT) $,  the Cauchy problem for the flow $\gamma: \TT \times [0,T] \to \RR^2$
\begin{equation}
	\begin{cases}
		\p_t \gamma(x,t) = v(\gamma(x,t) , t) &\\
		\gamma |_{t=0} = \gamma_0&
	\end{cases}
\end{equation}
has a unique solution $\gamma \in  C([0,T]; C^{1,\sigma }(\TT))$ for  $\sigma = \beta -2\alpha >0 $.
\end{theorem}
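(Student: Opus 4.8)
The plan is to dispense with Gancedo's contour equation and instead work directly with the flow $\g$, monitoring the two geometric quantities it carries along $\p\Om_t$: the metric $g=|\p_x\g|$ and the unit tangent $\T=g^{-1}\p_x\g$. Everything will be driven by a single analytic input — the regularity of the modified Biot--Savart velocity \eqref{eq:aux_BS_arc-length_gamma} restricted to the patch boundary. Indeed, for any flow $\g$, differentiating $\p_t\g=v(\g,t)$ in $x$ gives $\p_t\p_x\g=g\,\p_s v$, where $\p_s v$ is the arc-length derivative \emph{along} $\p\Om_t$ of the trace $v|_{\p\Om_t}$ (a quantity that depends only on $v$ on the curve, and will be made rigorous by Step~1); consequently
\[
\p_t g = g\,(\T\cdot\p_s v), \qquad \p_t\T = \N\,(\N\cdot\p_s v).
\]
So the non-degeneracy and the $C^{1,\sigma}$-regularity of $\g$ are entirely governed by the size of $\p_s v$.

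\textbf{Step 1: boundary regularity of $v$ — the main obstacle.} The crux is the claim that for a $C^{1,\beta}$ patch $\Om$ with $\beta>2\alpha$ one has $v|_{\p\Om}\in C^{1,\beta-2\alpha}$ along the curve, with norm controlled by $\|\Om\|_{C^{1,\beta}}$ and the geometry of $\p\Om$ (its diameter and chord--arc constant). Differentiating \eqref{eq:aux_BS_arc-length_gamma} in arc length,
\[
\p_s v(s) = \int_{\g}\T(s')\,\frac{(\g(s)-\g(s'))\cdot\T(s)}{|\g(s)-\g(s')|^{2+2\alpha}}\,ds'.
\]
By the basic $C^{1,\beta}$ estimates of Section~\ref{subsec:curves_estimates} one has $(\g(s)-\g(s'))\cdot\T(s)=(s-s')+O(|s-s'|^{1+\beta})$ and $|\g(s)-\g(s')|=|s-s'|\,(1+O(|s-s'|^{\beta}))$, so the kernel equals $\sign(s-s')\,|s-s'|^{-1-2\alpha}$ up to an $O(|s-s'|^{\beta-1-2\alpha})$ remainder that is integrable \emph{precisely} because $\beta>2\alpha$. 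Writing $\T(s')=\T(s)+(\T(s')-\T(s))$ and regrouping, the $\T(s)$-piece is a principal-value integral of an odd kernel and contributes only a bounded term, while the $(\T(s')-\T(s))$-piece has absolutely integrable integrand $\lesssim|\T(s')-\T(s)|\,|s-s'|^{-1-2\alpha}\lesssim|s-s'|^{\beta-1-2\alpha}$ — again integrable exactly when $\beta>2\alpha$; hence $\p_s v$ is finite, and repeating the estimate for $\p_s v(s_1)-\p_s v(s_2)$ produces the H\"older gain $|s_1-s_2|^{\beta-2\alpha}$. I expect this computation — in particular extracting the \emph{sharp} exponent $\beta-2\alpha$, and the borderline finiteness at $\beta=2\alpha^+$ — to be the technical heart; it is the content of \cite[Section~3]{asqg}.

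\textbf{Steps 2--3: a priori bounds and existence.} Granting Step~1, insert it into the identities above. The equation for $\log g$ has a bounded right-hand side, so $g(\cdot,t)$ stays comparable to $|\p_x\g_0|$ and $\g(\cdot,t)$ remains a genuine parametrization; taking $C^{0,\sigma}$-seminorms with $\sigma=\beta-2\alpha$, and using that $x\mapsto(\text{arc length of }\g(x,t))$ is a $C^{1,\sigma}$ change of variables, a Gronwall argument bounds $\|g(\cdot,t)\|_{C^{0,\sigma}}$ and $\|\T(\cdot,t)\|_{C^{0,\sigma}}$ on $[0,T]$ in terms of $\sup_t\|\Om_t\|_{C^{1,\beta}}$ and $\g_0$, and continuity in $t$ with values in $C^{1,\sigma}$ follows from the boundedness of the two right-hand sides. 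For existence one builds approximate flows by a time-stepping scheme (Euler steps followed by a projection onto $\p\Om_{t_k}$) in which each iterate genuinely parametrizes $\p\Om_{t_k}$, so that Step~1 applies at every step and the previous bounds hold uniformly; since $C^{1,\sigma}\hookrightarrow C^{1,\sigma'}$ compactly for $\sigma'<\sigma$, Arzel\`a--Ascoli yields a limit, which one checks solves $\p_t\g=v(\g,t)$ and keeps $\g(\cdot,t)\in\p\Om_t$, its full $C^{1,\sigma}$ regularity being recovered from the a priori bounds. The fussy point here is the moving-boundary bookkeeping in the approximation, not any essential difficulty.

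\textbf{Step 4: uniqueness.} If $\g_1,\g_2$ are two flows of the same patch $\Om_t$ with $\g_1|_{t=0}=\g_2|_{t=0}$, then $w=\g_1-\g_2$ solves $\p_t w=v(\g_1,t)-v(\g_2,t)$, and $|\p_t w|\le 2\|v\|_{L^\infty}$ gives $|w(x,t)|\lesssim t$ uniformly in $x$. Hence for $t$ below a fixed $\delta>0$ (depending only on $\|v\|_{L^\infty}$ and a time-uniform lower bound on the scale on which $\p\Om_t$ is chord--arc), the points $\g_1(x,t),\g_2(x,t)$ lie on a common arc of $\p\Om_t$, so the Lipschitz bound from Step~1 together with the chord--arc property give $|v(\g_1,t)-v(\g_2,t)|\lesssim|w|$; Gronwall forces $w\equiv0$ on $[0,\delta]$, and iterating over $\lceil T/\delta\rceil$ subintervals yields $\g_1\equiv\g_2$ on $[0,T]$.
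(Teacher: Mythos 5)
Your Step 1 is exactly the paper's route: the only new ingredient the paper supplies for this theorem is Lemma \ref{lemma:estimate_dv}, which establishes $\p_s v\in C^{0,\beta-2\alpha}$ on $\p\Om$ for a $C^{1,\beta}$ domain with $\beta>2\alpha$, via the same identity for $\p_s v$, the same $C^{1,\beta}$ expansions of $(\g(s)-\g(s'))\cdot\T(s)$ and $|\g(s)-\g(s')|$, and the same cancellation of the odd leading kernel; the threshold $\beta>2\alpha$ and the exponent $\sigma=\beta-2\alpha$ come out identically. The one place you are too quick is the H\"older continuity itself: ``repeating the estimate for $\p_s v(s_1)-\p_s v(s_2)$'' hides the necessary splitting of the integral at scale $\delta=|s_1-s_2|$ (a near-diagonal piece over $|s'|\lesssim\delta$ handled by the odd-kernel cancellation plus the integrable $O(|s'|^{-1+\beta-2\alpha})$ remainder, and a far piece over $|s'|\gtrsim\delta$ handled by a finite-difference bound of size $\delta^{\beta}|s'|^{-1-2\alpha}$ on the integrand); this is how the paper actually extracts $\delta^{\beta-2\alpha}$, but it is standard and your sketch points in the right direction.

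Where you diverge from the paper is Steps 2--4: the paper does not reprove existence and uniqueness of the flow; it cites \cite{asqg} and observes that the only $W^{2,p}$-specific input there is the velocity estimate, now replaced by Lemma \ref{lemma:estimate_dv}. Your reconstruction of that machinery has one genuine gap, in the existence step. Nothing in your time-stepping-plus-projection scheme uses the weak formulation \eqref{eq:def:asqg_patch}, yet that is the only hypothesis linking the prescribed evolution of $\Om_t$ to the velocity $v$. Without invoking it, there is no reason Euler steps of $v$ launched from $\p\Om_{t_k}$ land near $\p\Om_{t_{k+1}}$ (so that the projection errors vanish as the mesh shrinks), and hence no reason the Arzel\`a--Ascoli limit simultaneously solves $\p_t\g=v(\g,t)$ and parametrizes $\p\Om_t$. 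This Eulerian-to-Lagrangian passage is precisely what the paper flags as the nontrivial content of \cite{asqg}, and it cannot be dismissed as ``moving-boundary bookkeeping.'' Your Step 4 is fine as uniqueness among flows, i.e. among curves already known to parametrize $\p\Om_t$, since then both candidates lie on the boundary and the on-curve Lipschitz bound from Step 1 applies through the chord--arc property; as written it does not rule out ODE solutions that leave the boundary, which is consistent with the theorem only if uniqueness is interpreted within the class of flows.
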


Note that \cite[Theorem 3.11]{asqg} only proved the existence and uniqueness of the flow for $W^{2,p}$ patch solutions $p > \frac{1}{1 - 2\alpha}$. The proof hinges on several estimates of the velocity field, where   only \cite[Lemma 3.6]{asqg} was stated for $W^{2,\frac{1}{1 - 2\alpha}+}$ patch solutions and the rest  of the proof holds for $ C^{1, 2\alpha + }$ patch solutions. Therefore, to prove Theorem \ref{thm:flow_existence}, we just need to supply the below Lemma \ref{lemma:estimate_dv} to replace \cite[Lemma 3.6]{asqg}.

\begin{lemma}\label{lemma:estimate_dv}
Let $0< \alpha< \frac{1}{2}$ and $ \beta  >  2\alpha    $. Suppose  $\Omega $ is a $C^{1, \beta }$ bounded domain and $v$ is the velocity field of $\Omega $ according to \eqref{eq:aSQG_BS}.

For any arc-length parameterization of $\p \Omega $, the velocity field satisfies
\begin{align}\label{eq:dv_C1beta}
	\p_s v (s)  & =     P.V.  \int_{ \g    } \T (s' )   \frac{   (\g(s  )-\g( s' )) \cdot \T(s ) }{  |\g(s )-\g(s' )|^{2  +2\alpha }}  \,ds' ,
\end{align}
and the estimate
\begin{equation}\label{eq:estimate_v_C1beta}
	\big|   \p_s v    \big|_{C^{1,\sigma }} \leq C(\alpha,p , \Omega)
\end{equation}
where $\sigma = \beta -2\alpha >0 $.
\end{lemma}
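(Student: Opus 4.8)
\textbf{Proof proposal for Lemma \ref{lemma:estimate_dv}.}

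The plan is to establish the differentiation formula \eqref{eq:dv_C1beta} first, and then prove the $C^{1,\sigma}$ bound \eqref{eq:estimate_v_C1beta} by splitting the kernel into a near-diagonal singular part and a far regular part. For the identity \eqref{eq:dv_C1beta}, I would start from the arc-length Biot--Savart formula \eqref{eq:aux_BS_arc-length_gamma} and differentiate in $s$ under the (principal value) integral. Formally $\p_s v(s) = -\frac{1}{2\alpha}\int_{\g} \T(s') \, \p_s |\g(s)-\g(s')|^{-2\alpha}\,ds'$, and since $\p_s |\g(s)-\g(s')|^{-2\alpha} = -2\alpha\, (\g(s)-\g(s'))\cdot\T(s)\,|\g(s)-\g(s')|^{-2-2\alpha}$, the constant $-\frac{1}{2\alpha}$ cancels and one lands on \eqref{eq:dv_C1beta}. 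The subtlety is justifying the principal-value interpretation: near $s'=s$ the integrand behaves like $(s-s')|s-s'|^{-2-2\alpha} \sim |s-s'|^{-1-2\alpha}\operatorname{sign}(s-s')$, which is not absolutely integrable for $\alpha>0$, but the numerator $(\g(s)-\g(s'))\cdot\T(s)$ is odd to leading order by \eqref{eq:arc_length_e}, so symmetric truncation converges. I would make this rigorous by regularizing the kernel (e.g. replacing $|\g(s)-\g(s')|^{-2\alpha}$ by $(|\g(s)-\g(s')|^2+\delta^2)^{-\alpha}$), differentiating, and passing $\delta\to 0$ using dominated convergence on the symmetrized integrand together with the estimate \eqref{eq:arc_length_e}.

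For the quantitative bound, write $\p_s v(s) = I_{\mathrm{near}}(s) + I_{\mathrm{far}}(s)$ by introducing a fixed cutoff radius $r_0>0$ (depending only on $\Omega$, smaller than the scale at which $\p\Omega$ is a graph) separating $|s-s'|<r_0$ from $|s-s'|\ge r_0$. The far part $I_{\mathrm{far}}$ has a smooth, bounded kernel away from the diagonal and is easily seen to be $C^{1,\sigma}$ (indeed $C^\infty$ in $s$ up to the regularity of $\g$) with norm controlled by $|\g|_{C^{1}}$ and $r_0$. The near part is where the work is: I would subtract the leading term of the numerator, writing $(\g(s)-\g(s'))\cdot\T(s) = (s-s') + R(s,s')$ with $R(s,s') = O(|s-s'|^{1+2\beta})$ from \eqref{eq:arc_length_e} and also $R(s,s') = O(\mathcal{M}\k(s)\,|s-s'|^{2})$ is not quite what we want here since we only have $C^{1,\beta}$, so instead I would use the plain $C^{1,\beta}$ bound $R(s,s') = O(|s-s'|^{1+\beta})$. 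The term with $(s-s')$ in the numerator pairs with $|\g(s)-\g(s')|^{-2-2\alpha}$; using \eqref{eq:arc_length_d} to replace $|\g(s)-\g(s')|$ by $|s-s'|$ up to an $O(|s-s'|^{2\beta})$ factor, the principal-value integral $\mathrm{P.V.}\int_{|s-s'|<r_0} \T(s')(s-s')|s-s'|^{-2-2\alpha}\,ds'$ is analyzed by further replacing $\T(s')$ with $\T(s)$ (the difference is $O(|s-s'|^\beta)$ and kills the singularity) — the leading principal value then vanishes by oddness, leaving an absolutely convergent remainder. The remainder term with $R$ in the numerator has integrand $O(|s-s'|^{1+\beta}\cdot|s-s'|^{-2-2\alpha}) = O(|s-s'|^{\beta-2\alpha-1})$, which is integrable precisely because $\beta > 2\alpha$.

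The main obstacle — and the point where the hypothesis $\beta > 2\alpha$ is used twice over — is proving Hölder continuity of $\p_s v$ with exponent exactly $\sigma = \beta - 2\alpha$, not just boundedness. For this I would take two points $s_1, s_2$ with $h = |s_1-s_2|$ small, split each near-integral at radius $\sim h$ into an inner part ($|s-s'| \lesssim h$) and an outer part. The inner parts are each $O(h^{\beta-2\alpha}) = O(h^\sigma)$ directly from the integrability computation above (the integral of $|s-s'|^{\beta-2\alpha-1}$ over $|s-s'|<h$ is $O(h^{\beta-2\alpha})$, and the subtracted principal-value piece is handled by the same oddness cancellation localized to the small ball). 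For the outer parts, $|\p_{s_1} - \p_{s_2}|$ of the kernel is estimated by the mean value theorem, producing an extra factor $h$ against a kernel that is now one degree more singular, i.e. of order $|s-s'|^{\beta-2\alpha-2}$; integrating over $h \lesssim |s-s'| \lesssim r_0$ gives $O(h \cdot h^{\beta-2\alpha-1}) = O(h^{\beta-2\alpha}) = O(h^\sigma)$, using $\beta - 2\alpha - 1 < 0$. Throughout, the Hölder seminorm of $\T$ (equivalently the $C^{1,\beta}$ norm of $\g$) enters linearly, and the arc-length estimates of Lemma \ref{lemma:arc_length_estimates} are the essential tool for controlling the numerator and for the comparison $|\g(s)-\g(s')| \sim |s-s'|$. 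Combining the near and far contributions yields \eqref{eq:estimate_v_C1beta} with a constant depending only on $\alpha$, $\beta$ (written $p$ in the statement via $\beta = 1-\frac1p$ when $\Omega$ is $W^{2,p}$), and the $C^{1,\beta}$ geometry of $\Omega$.
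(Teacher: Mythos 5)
Your overall architecture coincides with the paper's: the identity \eqref{eq:dv_C1beta} is obtained by a limiting/distributional argument exploiting the odd leading order of $(\g(s)-\g(s'))\cdot\T(s)$, and the H\"older bound is obtained by splitting at the scale $h=|s_1-s_2|$ into an inner region handled by oddness cancellation (giving $\int_{|s'|\le h}|s'|^{-1+\beta-2\alpha}\,ds'=O(h^{\beta-2\alpha})$) and an outer region handled by a difference estimate on the kernel. This is exactly the paper's decomposition $\Delta_\delta[\p_s v]=I_1+I_2$, and your inner-region treatment is fine.

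However, your outer-region step fails as written. You propose to estimate the difference of the kernel at $s_1$ and $s_2$ by the mean value theorem in $s$, extracting a full factor of $h$ against a kernel ``one degree more singular,'' of order $|s-s'|^{\beta-2\alpha-2}$. For a domain that is only $C^{1,\beta}$ the tangent $\T$ is not differentiable, so neither the full kernel $\T(s')\,(\g(s)-\g(s'))\cdot\T(s)\,|\g(s)-\g(s')|^{-2-2\alpha}$ nor the remainder $R(s,s')=(\g(s)-\g(s'))\cdot\T(s)-(s-s')$ is differentiable in $s$: the formal derivative involves $\p_s\T=\k\N$, i.e.\ the curvature, which does not exist pointwise at this regularity (that device is reserved for the $W^{2,p}$ case, cf.\ Lemma \ref{lemma:estimate_dvT_W2p}). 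The repair, which is what the paper does, is to telescope the finite difference across the three factors and use only the $C^{0,\beta}$ modulus of $\T$: each factor contributes $h^{\beta}$ rather than $h$, namely $|\Delta_h\T(s'+s)|\lesssim h^{\beta}$, $|\Delta_h[(\g(s)-\g(s'+s))\cdot\T(s)]|\lesssim h^{\beta}|s'|$, and $|\Delta_h[|\g(s)-\g(s'+s)|^{-2-2\alpha}]|\lesssim h^{\beta}|s'|^{-2-2\alpha}$, the last by applying the mean value theorem only to the scalar map $r\mapsto r^{-1-\alpha}$ composed with the genuinely $C^1$ quantity $|\g(s)-\g(s'+s)|^2$. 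This gives $|\Delta_h K_{s'}|\lesssim h^{\beta}|s'|^{-1-2\alpha}$ and hence $\int_{|s'|\ge 2h}h^{\beta}|s'|^{-1-2\alpha}\,ds'\lesssim h^{\beta-2\alpha}$. The final exponent agrees with your arithmetic ($h\cdot h^{\beta-2\alpha-1}$ versus $h^{\beta}\cdot h^{-2\alpha}$), so the coincidence of exponents masks the fact that the intermediate bound you invoke is unavailable; with this substitution your argument closes and is essentially the paper's proof.
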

\begin{proof}
To prove the identity \eqref{eq:dv_C1beta}, one notices first that the right-hand side integral is uniformly bounded on $ \g$ thanks to the $C^{1,\beta}$ regularity of $\g$. Then \eqref{eq:dv_C1beta} follows from \eqref{eq:aux_BS_arc-length_gamma} via a distributional argument.

The proof of \eqref{eq:dv_C1beta} follows nearly verbatim    \cite[Lemma 3.6]{asqg}. 
We start with a reparametrization of the integral
\begin{align}\label{eq:aux_c1b_dsv_1}
	\p_s v (s)  & =     P.V.  \int_{ \g    } \T (s'+s )   \frac{   (\g(s  )-\g( s'+s )) \cdot \T(s ) }{  |\g(s )-\g(s'+s )|^{2  +2\alpha }}  \,ds' .
\end{align}
Denote $\Delta_\delta f = f( s+\delta ) - f(s)$ for a function $f$ and $\delta>0$. It suffices to show that
\begin{align}\label{eq:aux_c1b_dsv_2}
\Big|\Delta_\delta [ 	\p_s v  ]   \Big|  & \lesssim \delta^{ \beta - 2\alpha}
\end{align}
using the split $\Delta_\delta [ 	\p_s v  ] = I_1 + I_2 $
where
\begin{align*}
I_1 & = \lim_{\ep \to 0^+} \Big(   \int_{\ep \leq |s'| \leq 2\delta  } \int  \T (s'+s +\delta)    \frac{   (\g(s +\delta )-\g( s'+s +\delta)) \cdot \T(s +\delta) }{  |\g(s+\delta )-\g(s'+s +\delta)|^{2  +2\alpha }}  \,ds'   \\
 &  \qquad \qquad - \int_{\ep \leq |s'| \leq 2\delta  } \int  \T (s'+s  )    \frac{   (\g(s   )-\g( s'+s  )) \cdot \T(s  ) }{  |\g(s  )-\g(s'+s  )|^{2  +2\alpha }}  \,ds'   \Big),
\end{align*}
and
\begin{align*}
I_2 & =      \int_{2\delta \leq |s'|    }   \Delta_\delta \ \left[    \T (s'+s  )    \frac{   (\g(s   )-\g( s'+s  )) \cdot \T(s  ) }{  |\g(s  )-\g(s'+s  )|^{2  +2\alpha }}  \right]  \,ds'  .
\end{align*}

\noindent
\textbf{\underline{Estimate of $I_1$:}}

By the    $C^{1,\beta}$ regularity of $\g$ again, for any $s,s \in \RR$ there holds
\begin{equation}\label{eq:aux_c1b_dsv_3}
\begin{aligned}
 \T(s' ) &= \T (s)   + O(| s  - s' |^{  \beta  } ) \\
 (\g(s      )-\g(  s' )) \cdot \T(s ) &= (s  - s' ) + O(|s  - s'|^{ 1 + \beta  } ) \\
\frac{|s  - s' |^{2  +2\alpha } }{|\g(s)-\g(  s')|^{2  +2\alpha }}   &  =1 + O(|s  - s'|^{   \beta }  ) .
\end{aligned}
\end{equation}

Here we note the constants in the big-O terms are independent of $s,s'$.
Using \eqref{eq:aux_c1b_dsv_3}, we have
\begin{align*}
I_1 = \lim_{\ep\to 0+} &\int_{ \ep< |  s' |    \leq 2\delta } \T ( s+  \delta  )    \frac{ -   s'   }{  |  s'  |^{2  +2\alpha }} \,ds'  + \int_{ \ep< |  s' |    \leq 2\delta }  O(| s'|^{-1 + \beta - 2\alpha })  \,d   s' \\
&\quad -\int_{ \ep< |  s' | \leq 2\delta   }\T ( s )   \frac{   - s'   }{  |  s'  |^{2  +2\alpha }}\,ds'   + \int_{ \ep< |  s' |    \leq 2\delta }  O(| s'|^{  -1 + \beta - 2\alpha })  \,ds'    .
\end{align*}

The first terms on the right-hand side of the above two lines vanish by the oddness of the kernel $\frac{   - s'   }{  |  s'  |^{2  +2\alpha }}$, and we have
\begin{align*}
|I_1| & \lesssim     \int_{  | s'| \leq 2\delta   }   |    s' |^{   -1 + \beta - 2\alpha }      \,ds'.
\end{align*}
Since $ \beta >    2\alpha  $, the exponent $  -1 + \beta - 2\alpha > -1 $, and integrating we obtain
\begin{align*}
I_1 &= O(\delta^{  \beta -  2\alpha  }   ).
\end{align*}

\noindent
\textbf{\underline{Estimate of $I_2$:}}

In this regime, we first consider the finite difference (in variable $s$) of the integrand
\begin{align}\label{eq:aux_c1b_dsv_4}
 \Delta_\delta \left[ \T (s'+s )   \frac{   (\g(s  )-\g( s'+s )) \cdot \T(s ) }{  |\g(s )-\g(s'+s )|^{2  +2\alpha }}    \right] .
\end{align}

By telescoping and the $C^{1,\beta}$ regularity of $\gamma$ used before, we can obtain 
\begin{align}\label{eq:aux_c1b_dsv_5}
\left| \Delta_\delta \left[ \T (s'+s )   \frac{   (\g(s  )-\g( s'+s )) \cdot \T(s ) }{  |\g(s )-\g(s'+s )|^{2  +2\alpha }}    \right]   \right| \lesssim \delta^{\beta } |s'|^{-1-2\alpha}   
\end{align}
which follows from the below three estimates (recall $|s'| \geq 2\delta$):
\begin{subequations}
	\begin{align}
 \left| \Delta_\delta \left[ \T (s'+s )        \right]   \right| & \lesssim \delta^{\beta }\label{eq:aux_I_2_a} \\
\left| \Delta_\delta \left[   (\g(s  )-\g( s'+s )) \cdot \T(s )      \right]   \right| & \lesssim \delta^{\beta } |s'|  \label{eq:aux_I_2_b}  \\
\left| \Delta_\delta \left[     \frac{  1}{  |\g(s )-\g(s'+s )|^{2  +2\alpha }}    \right]   \right| & \lesssim \delta^{\beta } |s'|^{-2-2\alpha}  .\label{eq:aux_I_2_c}
\end{align}
\end{subequations}
Indeed, the first \eqref{eq:aux_I_2_a} follows from the $C^{\beta}$ regularity of $\T$; the second   \eqref{eq:aux_I_2_b}  is a consequence of the identity
\begin{align*}
 (\g(s  )-\g( s'+s )) \cdot \T(s )     = -s' - \int_{s+s'}^{s} (\T(\tau) - \T(s)) \cdot \T(s) \, d \tau ;
\end{align*}
and the third \eqref{eq:aux_I_2_c} follows from
\begin{equation} 
\begin{aligned}
 |\g(s )-\g(s'+s )|^{2   } &= \left( \T(s) s' +  \int_s^{s' +s }  (\T(\tau ) - \T(s) ) \,d \tau \right)^2  \\
& = |s'|^2 + 2 s' \int_s^{s' +s } \T(s) \cdot (\T(\tau ) - \T(s) ) \,d \tau \\
&  \qquad+  |s'|^2\Big(  \int_s^{s' +s }   \T(\tau ) - \T(s)   \,d \tau \Big)^2,
\end{aligned}
\end{equation}
and the mean value theorem for $ f(r) =  \frac{1}{r^{1+\alpha}}$ with $ r(s) = |\g(s )-\g(s'+s )|^{2   } $.

Once \eqref{eq:aux_c1b_dsv_5} is established, the bound for $I_2$ follows:
\begin{equation}
| I_2| \lesssim \int_{2\delta \leq |s'|    }   \delta^{\beta } |s'|^{-1-2\alpha}     \,ds' \lesssim \delta^{  \beta -  2\alpha  }   . 
\end{equation}

\end{proof}

\subsection{Velocity estimates along the flow}

The formula of $\p_s v$ given by Lemma \ref{lemma:estimate_dv} is stated in terms of an arc-length parameterization. To derive stability estimates of the flows, we need to rewrite it in terms of the Lagrangian label.

\begin{lemma}\label{lemma:estimate_v_W2p_Lagran}
Let $\beta  >2\alpha$ and $\Omega$ be a $C^{1,\beta}$ bounded domain. Suppose $\g \in C^{1,\sigma}(\TT)$ is a parameterization of  $\p \Omega$ for some $ \sigma >2\alpha$. 

Then the velocity field of $\Omega$ according to \eqref{eq:aSQG_BS} under the parameterization $\g$ satisfies 
\begin{equation}\label{eq:estimate_v_W2p_Lagran}
\p_s v (x)  = P.V. \int_{\TT} 	\T(x )   \frac{(\g (x) - \g (y) )\cdot \T (x)}{|\g (x) - \g (y)|^{2+2\alpha }}  g (y) \,  dy  \quad \text{for any $\g(x) \in  \p \Omega$}.
\end{equation}
 
\end{lemma}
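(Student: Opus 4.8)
The plan is to obtain \eqref{eq:estimate_v_W2p_Lagran} from the arc-length formula \eqref{eq:dv_C1beta} of Lemma~\ref{lemma:estimate_dv} by transporting it to the parameterization $\g$. Since $\Om$ is a $C^{1,\beta}$ domain with $\beta>2\alpha$, its arc-length parameterization, call it $\eta$, is of class $C^{1,\beta}$, so Lemma~\ref{lemma:estimate_dv} describes $\p_s v$ along $\p\Om$ by the principal-value integral \eqref{eq:dv_C1beta}. I would then introduce the arc-length function $\theta$ of $\g$ (the arc length from a fixed base point of $\p\Om$ to $\g(x)$), so that $\g=\eta\circ\theta$ and $\theta'=|\g'|=g>0$; since $g\in C^{0,\sigma}$, the map $\theta$ is an orientation-preserving $C^{1,\sigma}$ diffeomorphism of $\TT=\RR/L\ZZ$, and its inverse is also $C^{1,\sigma}$ because $(\theta^{-1})'=1/(g\circ\theta^{-1})$. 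As the unit tangent $\T$, the restriction of $v$ to $\p\Om$ and the arc-length derivative $\p_s$ are all intrinsic to the curve, substituting $s=\theta(x)$ and $s'=\theta(y)$ in \eqref{eq:dv_C1beta}, with $ds'=g(y)\,dy$, $\eta(\theta(y))=\g(y)$ and $\T(\theta(x))=\T(x)$, formally produces the integrand $\T(y)\,(\g(x)-\g(y))\cdot\T(x)\,|\g(x)-\g(y)|^{-2-2\alpha}g(y)$, i.e. the right-hand side of \eqref{eq:estimate_v_W2p_Lagran}. One could equally differentiate the boundary Biot--Savart law \eqref{eq:aux_BS_gamma} in $x$, substitute $\g'(x)=g(x)\T(x)$, and divide by $g(x)$ to pass from $\p_x$ to $\p_s$; this gives the same kernel and the same principal-value issue.

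Next I would make this rigorous, checking that the transported integral is an absolutely convergent integral plus a principal value of an odd kernel. Writing $\T(y)=\T(x)+(\T(y)-\T(x))$ in the integrand splits it into the leading term $\T(x)\,(\g(x)-\g(y))\cdot\T(x)\,|\g(x)-\g(y)|^{-2-2\alpha}g(y)$, whose kernel is odd at leading order in $y-x$ and of size $|y-x|^{-1-2\alpha}$, and a remainder; using $|\g(x)-\g(y)|\sim|x-y|$, $(\g(x)-\g(y))\cdot\T(x)=O(|x-y|)$, $|\T(x)-\T(y)|\lesssim|x-y|^{\sigma}$ and $g\in C^{0,\sigma}$, all valid because $\g\in C^{1,\sigma}$, the remainder integrand is $O(|x-y|^{\sigma-1-2\alpha})$, hence absolutely integrable \emph{precisely because} $\sigma>2\alpha$.

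The one point I expect to be the real obstacle is showing that the principal value survives the reparameterization. The symmetric truncation $\{|s'-s|>\ep\}$ defining the P.V. in \eqref{eq:dv_C1beta} pulls back to $\{|\theta(y)-\theta(x)|>\ep\}$; since $\theta$ and $\theta^{-1}$ are $C^{1,\sigma}$ one has $\{|\theta(y)-\theta(x)|>\ep\}=\{y>x+a_\ep\}\cup\{y<x-b_\ep\}$ with $a_\ep,b_\ep=\ep/g(x)+O(\ep^{1+\sigma})$, so this region differs from the symmetric interval $\{|y-x|>\ep/g(x)\}$ by two intervals of length $O(\ep^{1+\sigma})$ at distance $\sim\ep$ from $x$; on these the integrand is $O(\ep^{-1-2\alpha})$, so their total contribution is $O(\ep^{\sigma-2\alpha})$, which vanishes as $\ep\to0$ exactly because $\sigma>2\alpha$. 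It follows that the limit of the truncated $y$-integrals exists and equals the right-hand side of \eqref{eq:estimate_v_W2p_Lagran}, with the principal value read via $\{|x-y|>\ep\}$. This asymmetry estimate is the main, and essentially only, obstacle; the rest is bookkeeping already carried out in the proof of Lemma~\ref{lemma:estimate_dv} together with the $C^{1,\beta}$ estimates of Section~\ref{subsec:curves_estimates}.
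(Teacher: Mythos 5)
Your proposal is correct and follows essentially the same route as the paper: transport the arc-length principal-value formula of Lemma \ref{lemma:estimate_dv} via the $C^{1,\sigma}$ arc-length diffeomorphism, then show the pulled-back truncation $\{|\ell(x)-\ell(y)|>\ep\}$ can be replaced by the symmetric one because the symmetric-difference set lies at distance $\sim\ep$ from $x$, has measure $O(\ep^{1+\sigma})$, and carries an integrand of size $O(\ep^{-1-2\alpha})$, giving a contribution $O(\ep^{\sigma-2\alpha})\to 0$ since $\sigma>2\alpha$. This is exactly the paper's argument, so no further comparison is needed.
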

\begin{proof}
Given a parameterization $\g \in C^{1,\sigma}(\TT)$    of  $\p \Omega$,   consider the  arc-length variable $s : = \ell( x ) = \int_0^{x} g( y )\, d y $. By Lemma \ref{lemma:estimate_dvT_W2p}, and a change of variable
\begin{equation}
\begin{aligned}
\p_s v (x) &  = \lim_{\ep \to 0 } \int_{|\ell( x ) - \ell( y )  | \geq \ep } 	\T(x )   \frac{(\g (x) - \g (y) )\cdot \T (x)}{|\g (x) - \g (y)|^{2+2\alpha }}  g (y) \,  dy \\
&  := \lim_{\ep \to 0 } I_\ep(x) .
\end{aligned}
\end{equation}
 
Let us   consider the variant $ \widetilde{I}_{\ep }(x)$
\begin{equation}\label{eq:aux_curvature_kernal_I_1_3}
\begin{aligned}
\widetilde{I}_{\ep }(x):   & =  \int_{ g(x)|    x   - y  | \geq \ep  } \T(x )   \frac{(\g (x) - \g (y) )\cdot \T (x)}{|\g (x) - \g (y)|^{2+2\alpha }}  g (y) \,  dy .
\end{aligned}
\end{equation}
We claim $\lim_{\ep \to 0^+} \widetilde{I}_{\ep }(x)= \lim_{\ep \to 0^+}  {I}_{\ep }(x) $. Consider the set $S_\ep(y)$ of the symmetric difference
\begin{equation}
 S_\ep(x): =  \{y \in \TT : | \ell(x) -\ell(y) | \geq \ep \} \Delta \{y \in \TT : g(x) |   x  - y  | \geq \ep \}
\end{equation}
and for all sufficiently small $\ep>0$, we have $| x-y | \sim \ep  $ when $ y \in  S_\ep(x)$. 

It follows that
\begin{align*}
\left|  \widetilde{I}_{11}( \ep) -  {I}_{11}( \ep)  \right| & \lesssim   \int_{S_\ep(x) }    |x-y|^{-1-2\alpha }  \, dx  \, d y\\
& \lesssim \ep^{ -2\alpha }  |S_\ep| ,
\end{align*}
where $|S_\ep|$ is the Lebesgue measure of $ S_\ep$.

Since $x \mapsto \ell  (x)$ is a $C^{1,\sigma}$ diffeomorphism, the fundamental theorem of calculus implies that for each $x\in \TT$, 
has Lebesgue measure $|S_\ep| \lesssim \ep^{\sigma} $ and hence
\begin{equation}
\left|  \widetilde{I}_{11}( \ep) -  {I}_{11}( \ep)  \right| \lesssim    \ep^{\sigma-2\alpha }  \to 0 \quad \text{as $\ep \to 0$.}
\end{equation}

\end{proof}

\subsection{Evolution of tangent and arc-length}

Given a $C^{1,\sigma}$  flow $\g $ of an  $\alpha$-patch, we now derive evolution equations for the arc-length metric $g= |\p_x \g| :\TT \times [0,T] \to \RR^+ $ and the  tangent vector $\T: \TT \times [0,T] \to \mathbb{S}^1$. 

By the integral form of the flow equation
\begin{equation}\label{eq:aux_int_flow}
\g(x,t) = \g_0 (x) + \int_0^t v( \g(x,t') , t') \,dt'
\end{equation}
and the velocity field regularity $ \p_s v \in C^{ \sigma}(\TT)$, we can differentiate \eqref{eq:aux_int_flow} to find that $ \p_x \g \in C([0,T];C^{ \sigma}(\TT)) \cap C(\TT;C^{1}([0,T] ) ) $ satisfying the equation
\begin{equation}\label{eq:aux_eq_d_gamma}
\p_t (\p_x \g) = \p_s v g.
\end{equation}

Since $ g \T = \p_x \g $,  from \eqref{eq:aux_eq_d_gamma} it follows that the metric $g \in C([0,T];C^{ \sigma}(\TT))$ satisfies the equation
\begin{equation}\label{eq:metric_evolution}
\begin{cases}
	\p_t g = g \p_s v \cdot  \T &\\
	g |_{t = 0} = |\p_x \g_0 |. &
\end{cases} 
\end{equation}
By   $ g \T = \p_x \g $, \eqref{eq:aux_eq_d_gamma}, and \eqref{eq:metric_evolution} again, we have the evolution for the unit tangent vector $\T \in C([0,T];C^{ \sigma}(\TT))$,
\begin{equation}\label{eq:tangent_evolution}
\begin{cases}
	\p_t \T = \p_s v \cdot \N \N &\\
	\T |_{t = 0} = \p_s \g_0. &
\end{cases} 
\end{equation}

We remark that these equations \eqref{eq:aux_eq_d_gamma}, \eqref{eq:metric_evolution}, and \eqref{eq:tangent_evolution} are satisfied classically and will be used in Section \ref{sec:stablity} to prove the main stability estimates of the paper.

%

\section{Proof of   uniqueness theorems}\label{sec:proof}

In this section, we conclude the proof of Theorem \ref{thm:main_Sobo} and Theorem \ref{thm:main_Holder} assuming the following result. The proof of Theorem \ref{thm:uniqueness_by_flow} is the main subject of this paper and is postponed to the next section.

\begin{theorem}\label{thm:uniqueness_by_flow}
 Let $ 0 < \alpha < \frac{1}{2}$ and $ \beta  > 2\alpha $. Suppose that  $\g_i : \TT \times [0,T] \to \RR^2$, $i=1,2$ are two $ C^{1,\beta }$ flows of two corresponding $ C^{1,\beta }$ patch solutions  $ \Omega_{i,t}$. 
 
 Let $\T_i$ and $g_i$ be the tangent and arc-length of each $\g_i$ and define
\begin{equation}
\delta(t) = |\g_1 (t) -\g_2(t) |_{L^2}^2 +  |g_1 (t) -g_2(t) |_{L^2}^2 +  |\T_1 (t)  - \T_2 (t) |_{L^2}^2  . 
\end{equation} 

Then $t\mapsto \delta(t)$ is   continuous and for all $ t\in [0,T]$
\begin{equation}
\delta(t) \leq \delta(0) + C \int_0^t \delta(\tau ) \,  d\tau.
\end{equation}
 
\end{theorem}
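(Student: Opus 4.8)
The plan is to differentiate $\delta$ in time and close a Grönwall inequality, using only the evolution equations \eqref{eq:metric_evolution}--\eqref{eq:tangent_evolution} for the flows together with the Lagrangian Biot--Savart formulas \eqref{eq:aux_BS_gamma} and \eqref{eq:estimate_v_W2p_Lagran}. Since $\g_i\in C([0,T];C^{1,\sigma}(\TT))$ with $\sigma=\beta-2\alpha>0$ and $|\p_x\g_i|$ is bounded below on $\TT\times[0,T]$, the metrics $g_i$ and the frames $\T_i,\N_i$ lie in $C([0,T];C^{\sigma}(\TT))\cap C(\TT;C^{1}([0,T]))$, so $t\mapsto\delta(t)$ is continuous and, differentiating under the integral sign, is $C^1$ with
\begin{align*}
\tfrac{d}{dt}\delta(t)=2\!\int_\TT(\g_1-\g_2)\cdot\bigl(v_1(\g_1)-v_2(\g_2)\bigr)\,dx
&+2\!\int_\TT(g_1-g_2)\bigl(g_1\,\p_sv_1\!\cdot\!\T_1-g_2\,\p_sv_2\!\cdot\!\T_2\bigr)\,dx\\
&+2\!\int_\TT(\T_1-\T_2)\cdot\bigl((\p_sv_1\!\cdot\!\N_1)\N_1-(\p_sv_2\!\cdot\!\N_2)\N_2\bigr)\,dx.
\end{align*}
It suffices to bound each of the three integrals by $C\delta(t)$. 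Throughout one uses the uniform chord--arc bound $|\g_i(x,t)-\g_i(y,t)|\gtrsim d_\TT(x,y)$ on $[0,T]$ (valid since each $\Omega_{i,t}$ is an embedded $C^{1,\beta}$ curve depending continuously on the compact time interval), together with the identity $\p_x\rho=g_1\T_1-g_2\T_2=(g_1-g_2)\T_1+g_2(\T_1-\T_2)$ for $\rho:=\g_1-\g_2$, which gives $|\p_x\rho|_{L^2}\lesssim\delta(t)^{1/2}$.

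\emph{The contour integral.} Expanding $v_i(\g_i(x))$ via \eqref{eq:aux_BS_gamma}, split $v_1(\g_1(x))-v_2(\g_2(x))$ into the contributions of $\T_1(y)-\T_2(y)$, of $g_1(y)-g_2(y)$, and of the kernel difference $|\g_1(x)-\g_1(y)|^{-2\alpha}-|\g_2(x)-\g_2(y)|^{-2\alpha}$. By the chord--arc bound the first two are convolutions with the integrable kernel $d_\TT(x,y)^{-2\alpha}$, hence $\lesssim|\T_1-\T_2|_{L^2}+|g_1-g_2|_{L^2}$ in $L^2_x$. For the kernel difference, write it as $-\alpha\,\bar R^{-\alpha-1}\,(\rho(x)-\rho(y))\!\cdot\!(a+b)$ with $a+b=(\g_1(x)-\g_1(y))+(\g_2(x)-\g_2(y))$ and $\bar R\sim d_\TT(x,y)^2$: the part carrying $\rho(x)$ factors a fixed multiple of $\rho(x)$ out of a PV-convergent $y$-integral against the odd kernel $\tfrac{x-y}{|x-y|^{2+2\alpha}}$ (convergence by $\beta>2\alpha$ and the $C^{\beta}$-regularity of $\T_2g_2$, using \eqref{eq:arc_length_a}--\eqref{eq:arc_length_e}), while the part carrying $\rho(y)$ is dominated pointwise by $\mathcal M(|\p_x\rho|)(x)\int d_\TT(x,y)^{-2\alpha}\,dy$ through $|\rho(x)-\rho(y)|\lesssim d_\TT(x,y)\,\mathcal M(|\p_x\rho|)(x)$. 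Pairing with $\rho(x)$, Cauchy--Schwarz and the $L^2$-boundedness of $\mathcal M$ give a bound $C\bigl(|\rho|_{L^2}^2+|\rho|_{L^2}\,|\p_x\rho|_{L^2}\bigr)\le C\delta(t)$.

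\emph{The metric and tangent integrals.} These are the heart of the proof. Decompose $g_1\p_sv_1\!\cdot\!\T_1-g_2\p_sv_2\!\cdot\!\T_2$ and $(\p_sv_1\!\cdot\!\N_1)\N_1-(\p_sv_2\!\cdot\!\N_2)\N_2$ by swapping $\g,\T,\N,g$ one factor at a time in \eqref{eq:estimate_v_W2p_Lagran}. The only contributions threatening a loss of one derivative are those in which the difference $\T_1-\T_2$ (respectively $\rho$) is hit by the principal part $\tfrac{s-s'}{|s-s'|^{2+2\alpha}}$ of the kernel. The key point is that once such a term is paired against the corresponding difference in the $L^2$ integrals above, it takes the form $\iint\Psi(s)\,\Psi(s')\,\tfrac{s-s'}{|s-s'|^{2+2\alpha}}\,ds\,ds'$ with symmetric density $\Psi\otimes\Psi$ and an odd kernel, hence vanishes. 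This odd-symmetry cancellation is the exact analogue of the mechanism used for the curvature equation in \cite{asqg}, and is what prevents any derivative loss. Every residual term then carries either an improved integrable kernel $d_\TT(s,s')^{-1-2\alpha+\varepsilon}$ with $\varepsilon>0$ (replacing $\N_1(s)$ by $\N_1(s')$ costs $d_\TT(s,s')^{\beta}$, replacing the kernel by its principal part costs $d_\TT(s,s')^{2\beta}$ through \eqref{eq:arc_length_a}, \eqref{eq:arc_length_c}, \eqref{eq:arc_length_e}, and $\beta>2\alpha$), or a maximal function $\mathcal M(|\p_x\rho|)$ of $L^2$ norm $\lesssim\delta^{1/2}$ (from contour-difference terms, handled as above), or a harmless $g_1-g_2$ factor; in each case Cauchy--Schwarz and the maximal inequality give the bound $C\delta(t)$.

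Combining the three estimates gives $\tfrac{d}{dt}\delta(t)\le C\delta(t)$, and integrating yields $\delta(t)\le\delta(0)+C\int_0^t\delta(\tau)\,d\tau$ as claimed; continuity of $\delta$ was already observed. I expect the main obstacle to be the third paragraph: organizing the difference of two singular integrals so as to isolate the precise combination carrying the odd-symmetric principal part, and then checking — uniformly in $t\in[0,T]$ — that every remaining term genuinely gains either a positive power $d_\TT^{\varepsilon}$ or an $L^2$-controlled maximal function. Establishing the uniform chord--arc constant along both flows is a minor but necessary preliminary.
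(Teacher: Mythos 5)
Your overall architecture --- Gr\"onwall on $\delta(t)$, splitting each velocity difference into the pieces carrying $\T_1-\T_2$, $g_1-g_2$ and the kernel difference, controlling the kernel difference by maximal functions of $L^2$ norm $\lesssim\delta^{1/2}$, and killing the derivative-losing terms by the oddness of the principal part --- is exactly the paper's (its Lemma \ref{lemma:kernel} packages the symmetrization bound \eqref{eq:lemma:kernel_1}, the maximal-function bound \eqref{eq:lemma:kernel_2} and the principal-value bound \eqref{eq:lemma:kernel_3}, which are then fed into Propositions \ref{prop:tangent_diffenrence} and \ref{prop:metric_diffenrence}). For the tangent estimate your mechanism is correct: the dangerous term is
$\iint \Delta[\T_i](y)\cdot\N_1(x)\,K_1(y,x)\,\N_1(y)\cdot\Delta[\T_i](x)\,dy\,dx$, whose density is symmetric under $x\leftrightarrow y$, so only the symmetric part $K_1(y,x)+K_1(x,y)=O(|x-y|^{-1+\beta-2\alpha})$ survives and Young's inequality closes the bound.

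There is, however, a genuine gap in the arc-length estimate, where your blanket claim that every dangerous term is of the form $\iint\Psi(s)\Psi(s')\cdot(\text{odd kernel})$ fails. After telescoping $\Delta[g_i\,\p_s v_i\cdot\T_i]$ and pairing against $g_1-g_2$, the derivative-losing term is
\begin{equation*}
J_1=\int_{\TT}\int_{\TT}\Delta[\T_i](y)\cdot\T_1(x)\,K_1(y,x)\,g_1(x)\,\Delta[g_i](x)\,dy\,dx ,
\end{equation*}
whose density couples $\Delta[\T_i]$ at $y$ with $\Delta[g_i]$ at $x$; it is \emph{not} symmetric under $x\leftrightarrow y$, so the antisymmetric principal part of $K_1$ does not integrate to zero against it, and no rearrangement of the pairing makes it so. (Integrating in $x$ first via the principal-value bound only yields $|\Delta[\T_i]|_{L^1}\lesssim\delta^{1/2}$, which is useless for Gr\"onwall.) The paper's fix is a separate algebraic ingredient you do not have: since $|\T_1|=|\T_2|=1$, one has $\Delta[\T_i](y)\cdot\T_1(y)=\tfrac12|\T_1(y)-\T_2(y)|^2$, so after commuting $\T_1(x)\to\T_1(y)$ (which costs an integrable $|x-y|^{\beta}$) the $y$-dependence becomes \emph{quadratic} in $\Delta[\T_i]$; one then integrates the kernel in $x$ first against the $C^\beta$ function $g_1\Delta[g_i]$ using \eqref{eq:lemma:kernel_3} and obtains $|J_1|\lesssim|\Delta[\T_i]|_{L^2}^2$. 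Without this unit-vector identity (or some substitute for it) your treatment of the metric equation does not close; the rest of the proposal is consistent with the paper's argument.
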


We emphasize that in Theorem \ref{thm:uniqueness_by_flow}, the $ C^{1,\beta }$ regularity of the flows is an important assumption and in general can not be deduced from the $ C^{1,\beta }$ regularity of patch solutions.

\subsection{Proof of \texorpdfstring{$C^{1,  4\alpha+   }$}{C1,4a+} uniqueness}
Let $ \beta > 4\alpha$. Given two  two $C^{1,\beta  }$ patch solutions $\Omega_{i,t}$ on some $[0,T]$ that coincide at $t=0$, we need to show $  \Omega_{1,t} = \Omega_{2,t}$ for all time.

First, since the initial data  $  \Omega_{ 0} =  \Omega_{1,t} = \Omega_{2,t}$ at $t=0$, we can parameterize  $  \p \Omega_{ 0}  $ by some $\g_0 \in C^{1, \beta}(\TT)$. Then Theorem \ref{thm:flow_existence} shows that there are two flows $\g_i : \TT \times [0,T] \to \RR^2$ associated with each $\Omega_{i,t}$ with the same initial data $\g_0$.

Since the flows satisfy the regularity $ \g_i \in C  ( [0,T]; C^{\beta -2\alpha } ) $ and $\beta - 2\alpha > 2\alpha$, we can apply Theorem \ref{thm:uniqueness_by_flow} to obtain that
\begin{equation}
\delta(t) \lesssim \int_0^t \delta(\tau) \, d \tau  .
\end{equation} 
 It then follows from Gronwall that $\delta (t)= 0$ for $t \in [0,T]$.

Thus we must have $\g_1 (x,t)  =   \g_2 (x,t)$, and hence $  \Omega_{1,t} = \Omega_{2,t}$.

\subsection{Improved estimates for the \texorpdfstring{$W^{2,  p   }$}{W2p} case}

Using a similar construction to \cite[Lemma 3.7]{asqg}, one can show that there are examples of  $C^{1, \beta }$ domains $  \Omega$  where the velocity field $  v $ is not $C^{1, \beta }$ on $ \p \Omega$.

However, the below lemma shows that for $W^{2,  p   }$ patch solutions, the component $ \p_s v \cdot \T$ does reach the expected regularity $  C^{  1- \frac{1}{p}}$ using the maximal estimates from Lemma \ref{lemma:arc_length_estimates}.

For presentation simplicity,  we prove the $  C^{  1- \frac{1}{p}}$ regularity of $ \p_s v \cdot \T$ in the arc-length variable. The same regularity in terms of the Lagrangian label follows as well.

	\begin{lemma}\label{lemma:estimate_dvT_W2p}
		Let $0< \alpha< \frac{1}{2}$ and $ p > \frac{1}{1 - 2\alpha }   $. Suppose  $\Omega $ is a $W^{2,p}$ bounded domain and $v$ is the velocity field of $\Omega $ according to \eqref{eq:aSQG_BS}.
		
		Then the velocity field satisfies
		\begin{equation}\label{eq:estimate_v_W2p}
			\big|   \p_s v \cdot \T   \big|_{C^{0,\sigma }} \leq C(\alpha, p , \Omega)
		\end{equation}
		where $\sigma =1 - \frac{1}{p} >0$.
	\end{lemma}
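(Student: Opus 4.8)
\textbf{Proof plan for Lemma \ref{lemma:estimate_dvT_W2p}.}

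The plan is to start from the principal value formula \eqref{eq:dv_C1beta} for $\p_s v$ (valid since $W^{2,p}\subset C^{1,\beta}$ with $\beta = 1-\tfrac1p > 2\alpha$ by the hypothesis $p > \tfrac{1}{1-2\alpha}$), contract it with $\T(s)$, and then estimate the H\"older seminorm of the resulting scalar kernel integral. Writing
\begin{equation*}
\p_s v(s)\cdot\T(s) = \mathrm{P.V.}\int_{\g} \big(\T(s')\cdot\T(s)\big)\,\frac{(\g(s)-\g(s'))\cdot\T(s)}{|\g(s)-\g(s')|^{2+2\alpha}}\,ds',
\end{equation*}
the point is that the contraction with $\T(s)$ produces the \emph{two} favorable factors $\T(s')\cdot\T(s) = 1 + O(|s-s'|^{2\beta})$ from \eqref{eq:arc_length_a} and $(\g(s)-\g(s'))\cdot\T(s) = (s-s') + O(|s-s'|^{1+2\beta})$ from \eqref{eq:arc_length_e}, together with $|\g(s)-\g(s')|^{-2-2\alpha} = |s-s'|^{-2-2\alpha}(1 + O(|s-s'|^{2\beta}))$ from \eqref{eq:arc_length_d}. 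Thus the integrand is, to leading order, the odd kernel $-s'/|s'|^{2+2\alpha}$ (after the shift $s'\mapsto s'+s$) whose principal value vanishes, plus a genuinely integrable remainder of size $|s'|^{-1-2\alpha+2\beta}$; since $2\beta > 2\alpha \geq \alpha$ the remainder exponent $-1-2\alpha+2\beta > -1$ and the integral converges absolutely. This already re-derives boundedness; the content of the lemma is the $C^{0,\sigma}$ bound with $\sigma = \beta = 1-\tfrac1p$.

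For the H\"older estimate I would run the same $\delta$-splitting as in the proof of Lemma \ref{lemma:estimate_dv}: write $\Delta_\delta[\p_s v\cdot\T] = I_1 + I_2$ with $I_1$ the near-diagonal piece $|s'|\le 2\delta$ (differenced at $s$ and $s+\delta$ separately, using the cancellation of the odd kernel on each annulus) and $I_2$ the far piece $|s'|\ge 2\delta$ (differenced inside the integral). For $I_1$ the odd-kernel cancellation kills the main term and one is left with $\int_{|s'|\le2\delta}|s'|^{-1-2\alpha+2\beta}\,ds' \lesssim \delta^{2\beta-2\alpha}$, which is $\lesssim \delta^{\beta}$ since $\beta > 2\alpha$. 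For $I_2$ the crucial improvement over Lemma \ref{lemma:estimate_dv} is that now one uses the \emph{maximal-function} estimates \eqref{eq:arc_length_M_a}--\eqref{eq:arc_length_M_d} from Lemma \ref{lemma:arc_length_estimates} to bound the $s$-difference of the integrand: the factor $\T(s)\cdot[\T(s)-\T(s'+s)] = O(\mathcal{M}\k(s)\,|s'|^{1+\beta})$ and the analogous bound $[\g(s)-\g(s'+s)]\cdot[\T(s)-\T(s'+s)] = O(\mathcal{M}\k(s)\,|s'|^{2+\beta})$ replace the crude $O(|s'|^\beta)$-type bounds, gaining an extra power of $|s'|$ weighted by $\mathcal{M}\k(s)$. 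Telescoping the difference of the product as in \eqref{eq:aux_c1b_dsv_5} then gives an integrand bound $\lesssim (\mathcal{M}\k(s) + 1)\,\delta^{\beta}|s'|^{-1-2\alpha}$ on $|s'|\ge 2\delta$ — but the key subtlety is that one must be more careful: the terms where the $\delta$-difference hits $\T(s'+s)$ (not contracted against $\T(s)$) only give $O(\delta^\beta)$, so one must split $\T(s'+s) = \T(s) + (\T(s'+s)-\T(s))$ and absorb the $\T(s)$ part into the contraction, leaving differences that come with the extra $|s'|^\beta$ or $\mathcal{M}\k(s)|s'|$ gain. Integrating over $|s'|\ge 2\delta$ yields $|I_2| \lesssim (\mathcal{M}\k(s)+1)\,\delta^{\beta}\delta^{-2\alpha} = (\mathcal{M}\k(s)+1)\,\delta^{\beta-2\alpha}$; since $\mathcal{M}\k \in L^p$ this is finite for a.e.\ $s$, and combined with a short argument (e.g.\ a second splitting at scale adapted so the exponent becomes exactly $\beta$, or observing that $2\beta - 2\alpha \ge \beta$ fails in general but $\beta - 2\alpha + \beta = 2\beta - 2\alpha \ge \beta$ does hold, so the $\delta^{\beta-2\alpha}$ loss is compensated by choosing the cutoff at $\delta^{?}$) one upgrades to the clean $C^{0,\sigma}$ bound with $\sigma = \beta$.

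The main obstacle is the bookkeeping in $I_2$: one must verify that \emph{every} term arising from telescoping the $\delta$-difference of the triple product carries either the cancellation of the odd kernel or one of the maximal-type bounds \eqref{eq:arc_length_M_a}--\eqref{eq:arc_length_M_d}, so that no term produces only the losing estimate $\delta^\beta|s'|^{-2\alpha}$ (which would diverge at infinity after integration — here the compact support / periodicity of $\p\Om$ actually saves the far tail, so the real danger is near $|s'|\sim\delta$). Concretely, the term where $\Delta_\delta$ acts on the leading $\T(s'+s)$ factor must be handled by pairing with the contraction against $\T(s)$: $\Delta_\delta[\T(s'+s)]\cdot\T(s)$ needs the curvature-weighted bound rather than the plain H\"older bound, and this is exactly where the hypothesis $p > \frac1{1-2\alpha}$ (equivalently $\beta > 2\alpha$) together with the integrability $\mathcal{M}\k\in L^p$ is used in an essential way. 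Once that term-by-term analysis is in place, the rest is the same routine computation as in Lemma \ref{lemma:estimate_dv}, and the proof that the same regularity holds in the Lagrangian label follows from the change-of-variables argument in Lemma \ref{lemma:estimate_v_W2p_Lagran}.
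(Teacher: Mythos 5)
Your treatment of $I_1$ matches the paper's: contracting with $\T(s)$ upgrades the error exponents to $2\beta$ via \eqref{eq:arc_length_a}, \eqref{eq:arc_length_d}, \eqref{eq:arc_length_e}, the odd kernel cancels on each annulus, and $\int_{|s'|\le 2\delta}|s'|^{-1-2\alpha+2\beta}\,ds'\lesssim\delta^{2\beta-2\alpha}\lesssim\delta^{\beta}$ since $\beta>2\alpha$. The gap is in $I_2$. Your claimed integrand bound $(\mathcal{M}\k(s)+1)\,\delta^{\beta}|s'|^{-1-2\alpha}$ integrates over $|s'|\ge 2\delta$ to $\delta^{\beta-2\alpha}$, which you acknowledge is short of the target, and the proposed repairs (``a second splitting at scale $\delta^{?}$'', ``$2\beta-2\alpha\ge\beta$ compensates the loss'') do not close the argument. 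A $C^{0,\beta-2\alpha}$ bound is precisely what Lemma \ref{lemma:estimate_dv} already delivers for the full $\p_s v$, so stopping there makes the lemma vacuous; the entire point of the $W^{2,p}$ hypothesis is to reach the exponent $\sigma=\beta=1-\frac1p$ for the tangential component, which is what the $W^{2,p}$ uniqueness proof needs in order to upgrade the flow to $C^{1,1-\frac1p}$.

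The missing idea is to gain a factor $|s'|^{\beta}$ in the far-field bound, i.e.\ to prove $|\Delta_\delta K_{s'}(s)|\lesssim\delta^{\beta}|s'|^{-2\alpha-\frac1p}$ rather than $\delta^{\beta}|s'|^{-1-2\alpha}$; since $-2\alpha-\frac1p=-1-2\alpha+\beta>-1$, the $s'$-integral then converges uniformly in $\delta$ and no $\delta^{-2\alpha}$ is lost. The paper achieves this not by telescoping the finite difference of the product but by differentiating the whole integrand $K_{s'}(s)=\T(s'+s)\cdot\T(s)\,\frac{(\g(s)-\g(s'+s))\cdot\T(s)}{|\g(s)-\g(s'+s)|^{2+2\alpha}}$ in $s$ (it is weakly differentiable with $L^p$ derivative because $\T,\N\in W^{1,p}$), and checking term by term, using the maximal estimates \eqref{eq:arc_length_b}, \eqref{eq:arc_length_M_a}, \eqref{eq:arc_length_M_c}, \eqref{eq:arc_length_M_d} of Lemma \ref{lemma:arc_length_estimates}, that $|\p_s K_{s'}(s)|\lesssim|s'|^{-2\alpha-\frac1p}\bigl(\k(s+s')+\k(s)+\mathcal{M}\k(s)\bigr)$: each of the five terms of $\p_s K_{s'}$ carries one explicit curvature factor or a tangent increment, and the remaining geometric factors come with the improved powers of $|s'|$. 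The fundamental theorem of calculus on $[s,s+\delta]$ combined with H\"older's inequality, $\int_s^{s+\delta}\k(\tau)\,d\tau\le|\k|_{L^p}\,\delta^{1-\frac1p}$, then supplies the factor $\delta^{\beta}$. The structural point you half-identify --- that every term must be paired either with the contraction against $\T(s)$ or with a curvature weight --- is realized automatically in this derivative computation, whereas your direct telescoping, as written, does not reach the stated exponent.
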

	\begin{proof}
		We focus on the $C^{0,\sigma}$   continuity using the formula
		\begin{align*}
			\p_s v (s)  & =     P.V.  \int_{ \g    } \T (s'+s )   \frac{   (\g(s  )-\g( s'+s )) \cdot \T(s ) }{  |\g(s )-\g(s'+s )|^{2  +2\alpha }}  \,ds' .
		\end{align*}

	As in Lemma \ref{lemma:estimate_dv},	for any $\delta >0$, we denote by $\Delta_\delta f(s) =f(s+\delta) -f(s)  $   and consider the split
		\begin{align*}
			\Delta_\delta [ \p_s v   \cdot \T ]  & = (\p_s v  \cdot \T) (s  + \delta) - (\p_s v \cdot \T )  (s) = I_1 + I_2
		\end{align*}
		where
		\begin{align*}
			I_1 =  \lim_{\ep\to 0+} &\int_{ \ep< |  s' |    \leq 2\delta } \T ( s+  \delta+  s'  ) \cdot \T ( s+  \delta  ) \\
			& \qquad \times \frac{   (\g( s +\delta  )-\g(   s+  \delta+  s' )) \cdot \T( s +\delta ) }{  |\g( s+\delta )-\g(   s+  \delta+  s' )|^{2  +2\alpha }}    \,d   s' \\
			&\quad - \lim_{\ep\to 0+} \int_{ \ep< |  s' | \leq 2\delta   }\T (s'+s )  \cdot \T ( s  ) \frac{   (\g(s  )-\g( s'+s )) \cdot \T(s ) }{  |\g(s )-\g(s'+s )|^{2  +2\alpha }}  \,ds'
		\end{align*}
		and
		\begin{align*}
			I_2 =    \int_{   |  s'| \geq 2\delta   }  \Delta_\delta \left[ \T (s'+s ) \cdot \T ( s  )  \frac{   (\g(s  )-\g( s'+s )) \cdot \T(s ) }{  |\g(s )-\g(s'+s )|^{2  +2\alpha }}    \right] \,ds'.
		\end{align*}
		We will show the estimates $|I_1|,|I_2|  \lesssim \delta^{1- \frac{1}{p}}  $.
		
		\noindent
		\textbf{\underline{Estimate of $I_1$:}}

		By the estimates   \eqref{eq:arc_length_a}, \eqref{eq:arc_length_d}  and \eqref{eq:arc_length_e}   from Lemma \ref{lemma:arc_length_estimates}, for any $s,s' \in \RR$
		\begin{equation}\label{eq:aux_estimate_v_on_boundary_Cbeta}
			\begin{aligned}
				\T(s' ) \cdot \T (s) &= 1   + O(| s  - s' |^{2(1- \frac{1}{p}) } ) \\
				(\g(s      )-\g(  s' )) \cdot \T(s ) &= (s  - s' ) + O(|s  - s'|^{ 1+ 2(1- \frac{1}{p})  } ) \\
				\frac{|s  - s' |^{2  +2\alpha } }{|\g(s)-\g(  s')|^{2  +2\alpha }}   &  =1 + O(|s  - s'|^{     2(1- \frac{1}{p})   }  ) .
			\end{aligned}
		\end{equation}
		
		Using \eqref{eq:aux_estimate_v_on_boundary_Cbeta}, we have
		\begin{align*}
			I_1 = \lim_{\ep\to 0+} & \left( \int_{ \ep< |  s' |    \leq 2\delta }    \frac{ -   s'   }{  |  s'  |^{2  +2\alpha }} \,ds'  + \int_{ \ep< |  s' |    \leq 2\delta }  O(| s'|^{-1 - 2\alpha + 2(1- \frac{1}{p})  })  \,d   s' \right. \\
			&\quad -\left. \int_{ \ep< |  s' | \leq 2\delta   }    \frac{   - s'   }{  |  s'  |^{2  +2\alpha }}\,ds'   + \int_{ \ep< |  s' |    \leq 2\delta }  O(| s'|^{-1 - 2\alpha + 2(1- \frac{1}{p}) })  \,ds'\right).
		\end{align*}
		
		The first terms in the above two lines vanish by oddness, and we have
		\begin{align*}
			|I_1| & \lesssim     \int_{  | s'| \leq 2\delta   }   |    s' |^{    -1 - 2\alpha  + 2(1- \frac{1}{p})    }      \,ds' \lesssim \delta^{2-2\alpha - \frac{2 }{p} } \lesssim \delta^{ 1- \frac{1}{p}} 
		\end{align*}
		where the last inequality follows from $p> \frac{1}{1 -2\alpha }  $.

		\noindent
		\textbf{\underline{Estimate of $I_2$:}}

		In this regime, we need to consider the finite difference of the integrand
		\begin{align}\label{eq:aux_proofholder_Cbeta}
			\Delta_\delta \left[ \T (s'+s )\cdot \T(s )   \frac{   (\g(s  )-\g( s'+s )) \cdot \T(s ) }{  |\g(s )-\g(s'+s )|^{2  +2\alpha }}    \right] .
		\end{align}
		To reduce notation, for each fixed $|s'| > 0$, let us denote the function in \eqref{eq:aux_proofholder_Cbeta} by $K_{s'}(s)$, namely
		$$
		s \mapsto  K_{s'}(s):=  \T (s'+s ) \cdot \T(s )   \frac{   (\g(s  )-\g( s'+s )) \cdot \T(s ) }{  |\g(s )-\g(s'+s )|^{2  +2\alpha }} .
		$$
		From the regularity $\T, \N \in W^{1,p}$, we know that for each fixed $|s'| > 0$, $ s\mapsto K_{s'}$ is weakly differentiable with $L^p$ derivative:
		\begin{equation}
			\begin{aligned}
				\p_s K_{s'} (s) = & - \k(s'+s )\N (s'+s )\cdot \T(s )    \frac{   (\g(s  )-\g( s'+s )) \cdot \T(s ) }{  |\g(s )-\g(s'+s )|^{2  +2\alpha }}  \\
				& -\k( s )\T (s'+s )\cdot \N(s )    \frac{   (\g(s  )-\g( s'+s )) \cdot \T(s ) }{  |\g(s )-\g(s'+s )|^{2  +2\alpha }} \\
				& + \T (s'+s ) \cdot \T(s )     \frac{   (\T(s  )-\T( s'+s )) \cdot \T(s ) }{  |\g(s )-\g(s'+s )|^{2  +2\alpha }} \\
				& -\k(s) \T (s'+s ) \cdot \T(s )     \frac{   (\g(s  )-\g( s'+s )) \cdot \N(s ) }{  |\g(s )-\g(s'+s )|^{2  +2\alpha }}\\
				& -(2+2\alpha )\T (s'+s )  \cdot \T(s )    \frac{   (\g(s  )-\g( s'+s )) \cdot \T(s )     }{  |\g(s )-\g(s'+s )|^{4  +2\alpha }}\\
				& \qquad \times (\g(s  )-\g( s'+s )) \cdot (\T(s )  -\T( s'+s  ) ).
			\end{aligned}
		\end{equation}
		Using the   $W^{2,p}$ estimates in Lemma \ref{lemma:arc_length_estimates}, it is straightforward to show that $K_{s'}' $ satisfies the point-wise estimate
		\begin{equation}\label{eq:aux_K'_mean_value_0_Cbeta}
			|\p_s K_{s'} (s)| \lesssim       |s'|^{ -2\alpha  - \frac{1}{p}}  \left( \k(s+s')  + \k(s)+ \mathcal{M}\k(s) \right)  .
		\end{equation}
		So by the fundamental theorem of calculus for $W^{1,p} $ functions and the bound \eqref{eq:aux_K'_mean_value_0_Cbeta}, we have
		\begin{equation}\label{eq:aux_K'_mean_value_Cbeta}
			\begin{aligned}
				|K_{s'}(s+\delta) - K_{s'}(s ) |  & \leq \int_{s}^{s+\delta}  |\p_s K_{s'} (\tau )|\, d\tau \\
				& \lesssim   |s'|^{ -2\alpha  - \frac{1}{p}}  \delta^{1- \frac{1}{p}} .
			\end{aligned}
		\end{equation}

		Now we apply the estimate \eqref{eq:aux_K'_mean_value_Cbeta} for $K_{s'}(s)$ to $I_2$, obtaining
		\begin{align*}
			| I_2| \leq     \int_{ \frac{L}{2}  \geq |  s'| \geq 2\delta   }  \Delta_\delta \left[  K_{s'}(s) \right] \,ds' \lesssim \delta^{1- \frac{1}{p}}  \int_{    \frac{L}{2}  \geq |  s'| \geq 2\delta   }   |s'|^{ -2 \alpha - \frac{1}{p} }  \, ds' \lesssim \delta^{1- \frac{1}{p}}  
		\end{align*}
		where $ -2 \alpha - \frac{1}{p} < -1$ ensures the integral is bounded.

	\end{proof}

\subsection{Proof of \texorpdfstring{$W^{2,  p   }$}{W2p} uniqueness}

Given two  two $W^{2,p}$ patch solutions $\Omega_{i,t}$ on some $[0,T]$ that coincide at $t=0$, we need to show $  \Omega_{1,t} = \Omega_{2,t}$ for all time.

First, since $ C^{1,1 -\frac{1}{p}}  \subset W^{2,p}  $, Theorem \ref{thm:flow_existence} shows that there are two flows $\g_i : \TT \times [0,T] \to \RR^2$ associated with each $\Omega_{i,t}$ with the same initial data $\g_0$ and the regularity   $\g_i   \in C(  [0,T];C^{1 , 1 -\frac{1}{p} -2\alpha }(\TT)) $. Arguing similarly to the $C^{1,4\beta+}$ case, thanks to Theorem  \ref{thm:uniqueness_by_flow}, it suffices to show that this flow satisfies the improved regularity $\g_i   \in C(  [0,T];C^{1 , 1 -\frac{1}{p} }(\TT)) $.

Since the flow $\g_i $ parameterizes $\p  \Omega_{i,t} $ which is of class $W^{2,p}$, it suffices to show the metrics $g_i = |\p_x \g | \in  C(  [0,T];C^{1 -\frac{1}{p} }(\TT))$. By $C^{1 , 1 -\frac{1}{p} -2\alpha }$ regularity of $\g_i$ and Lemma \ref{lemma:estimate_dvT_W2p}, we have that $ \p_s v_i \cdot \T_i$ is also $C^{1 -\frac{1}{p} }$ in the corresponding parametrization $\g_i$. Viewing $\p_s v_i \cdot \T_i $ as   known $L^\infty (  [0,T];C^{1 -\frac{1}{p} }(\TT))$ functions, from \eqref{eq:metric_evolution} we obtain
\begin{equation}
	g_i( t) = |\p_x \g_i(0)| e^{\int_0^t \p_s v_i\cdot \T_i  } .
\end{equation}
This implies that  $g_i \in  C(  [0,T];C^{1 -\frac{1}{p} }(\TT))$.

With the regularity $ \g_i \in  C(  [0,T];C^{1 -\frac{1}{p} }(\TT))$,   we conclude by Theorem  \ref{thm:uniqueness_by_flow}  that $\g_1 (x,t)  =   \g_2 (x,t)$, and hence $  \Omega_{1,t} = \Omega_{2,t}$.

\section{Stability estimates}\label{sec:stablity}

We now derive suitable stability estimates of Theorem \ref{thm:uniqueness_by_flow} for two different patch solutions $  \Omega_{i,t} $ in terms of the  tangent vectors and arc-length metrics  of their boundaries $  \p \Omega_{i,t} $.

\subsection{Oddness of the kernel}

Since we will be frequently taking the difference between functions defined by the flows $\g_1 $ and $\g_2$, we introduce the notation $\Delta \left[f_i \right] := f_1 -f_2$ for any functions $f_i$ defined by $\g_i $ on $\TT$. For instance, $\Delta [\g_i](\xi) = \g_1(\xi) -\g_2(\xi)  $ and $\Delta \left[ \T_i(\xi) - \T_i(\eta) \right]= \left[ \T_1(\xi) - \T_1(\eta) \right]-\left[ \T_2(\xi) - \T_2(\eta) \right]    $. We will frequently use the telescoping formula
\begin{equation}\label{aux22122a}
\Delta [f_i g_i] = \Delta [f_i] g_1 +f_2 \Delta [ g_i].
\end{equation}
 
The stability estimates require the following result, related to the kernel appearing in the evolution of tangent vectors and arc-length metrics as in  \eqref{eq:estimate_v_W2p_Lagran}.

\begin{lemma}\label{lemma:kernel}
	Let $ 0 < \alpha < \frac{1}{2}$ and $ \beta >  2\alpha $. Assume that $\g_i : \TT \times [0,T] \to \RR^2$, $i=1,2$ are two $C^{1,\beta }$ flows of two corresponding $  C^{1,\beta }$ patch solutions $\Om_{i, t}$.
	
	 Let $\T_i$ and $g_i$ be the tangent and arc-length of each $\g_i$ and define $K_i: \TT \times \TT \to \RR  $
	\begin{equation}
		K_i(y,x ) = g_i(y) \frac{(\g_i(x) - \g_i(y) )\cdot \T_i(x)}{|\g_i(x) - \g_i(y)|^{2+2\alpha }} .
	\end{equation}
	
	Then for $|y-x|>0$,
	\begin{equation}\label{eq:lemma:kernel_1}
		| K_i(y,x ) + K_i(x,y )| \lesssim |x-y|^{-1 +\beta -2\alpha  }, 
	\end{equation}
	and for $|y-x|>0$ and any $z\in \RR $ such that $\max\{|z-x|, |z - y| \} \leq |x- y|$,
	\begin{equation}\label{eq:lemma:kernel_2}
		|\Delta [K_i(y,x )  ]  | \lesssim |x-y|^{-1 -2\alpha} \Big(    \mathcal{M} \Delta[g_i](z) + \mathcal{M} \Delta[\T_i](z) \Big)
	\end{equation}
	where $\mathcal{M} f$ denotes the periodic maximal function for any $f:\TT \to \RR $.

	Furthermore, for any $ a \in C^{\beta}(\TT)$, there holds
	\begin{equation}\label{eq:lemma:kernel_3}
		\Big| \int_{\TT} a(y) K_i(y,x) \, dy \Big| + \Big| \int_{\TT} a(x) K_i(y,x) \, dx  \Big| \lesssim |a|_{C^\beta }
	\end{equation}
	where the integrals are interpreted in the principal value sense.
\end{lemma}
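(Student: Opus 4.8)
The plan is to establish the three bounds in the order \eqref{eq:lemma:kernel_1}, \eqref{eq:lemma:kernel_3}, \eqref{eq:lemma:kernel_2}, since the cancellation in \eqref{eq:lemma:kernel_1} is exactly what makes the principal values in \eqref{eq:lemma:kernel_3} legitimate, while \eqref{eq:lemma:kernel_2} is the delicate one. For \eqref{eq:lemma:kernel_1} I would add the two kernels, pull out the common denominator, and write
\[
K_i(y,x)+K_i(x,y)=\frac{(\g_i(x)-\g_i(y))\cdot\left(g_i(y)\T_i(x)-g_i(x)\T_i(y)\right)}{|\g_i(x)-\g_i(y)|^{2+2\alpha}} .
\]
Telescoping the bracket as $g_i(y)(\T_i(x)-\T_i(y))+(g_i(y)-g_i(x))\T_i(y)$ and using the $C^{1,\beta}$ regularity of $\g_i$ together with $\g_i(x)-\g_i(y)=\int_y^x g_i\T_i\,d\tau$, one finds $|(\g_i(x)-\g_i(y))\cdot(\T_i(x)-\T_i(y))|\lesssim|x-y|^{1+\beta}$ and $|(g_i(y)-g_i(x))\,(\g_i(x)-\g_i(y))\cdot\T_i(y)|\lesssim|x-y|^{1+\beta}$, so the numerator is $O(|x-y|^{1+\beta})$; since the flow is bi-Lipschitz on the compact $\TT$ one has $|\g_i(x)-\g_i(y)|\sim|x-y|$, and dividing gives \eqref{eq:lemma:kernel_1}.

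For \eqref{eq:lemma:kernel_3} only the near-diagonal behaviour matters. Expanding $\g_i(x)-\g_i(y)=g_i(x)\T_i(x)(x-y)+O(|x-y|^{1+\beta})$ and $|\g_i(x)-\g_i(y)|^{2+2\alpha}=(g_i(x)|x-y|)^{2+2\alpha}(1+O(|x-y|^{\beta}))$ yields
\[
K_i(y,x)=\frac{x-y}{g_i(x)^{2\alpha}\,|x-y|^{2+2\alpha}}+O\!\left(|x-y|^{-1-2\alpha+\beta}\right),
\]
and the same expansion carried out in the variable $x$ gives the analogous identity with $g_i(y)$ in place of $g_i(x)$. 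In each case the leading kernel is odd in $x-y$ with a coefficient that is constant in the integration variable, so its principal value over a symmetric neighbourhood of the diagonal vanishes; the remainder is integrable because $\beta>2\alpha$ and the far part of $K_i$ is bounded, so $\mathrm{p.v.}\int_\TT K_i(y,x)\,dy$ and $\mathrm{p.v.}\int_\TT K_i(y,x)\,dx$ exist and are $O(1)$. Writing $a(y)=(a(y)-a(x))+a(x)$ (resp.\ $a(x)=(a(x)-a(y))+a(y)$), the Hölder bound $|a(y)-a(x)|\le|a|_{C^\beta}|x-y|^{\beta}$ with $|K_i(y,x)|\lesssim|x-y|^{-1-2\alpha}$ makes the $a(y)-a(x)$ piece absolutely convergent with bound $\lesssim|a|_{C^\beta}\int_\TT|x-y|^{-1-2\alpha+\beta}\,dy\lesssim|a|_{C^\beta}$, and the remaining piece is $a(x)$ (resp.\ $a(y)$) times the $O(1)$ principal value; this gives \eqref{eq:lemma:kernel_3}.

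For \eqref{eq:lemma:kernel_2} I would write $K_i(y,x)=A_iB_iC_i$ with $A_i=g_i(y)$, $B_i=(\g_i(x)-\g_i(y))\cdot\T_i(x)$ and $C_i=|\g_i(x)-\g_i(y)|^{-2-2\alpha}$, so that $|A_i|\sim1$, $|B_i|\lesssim|x-y|$, $|C_i|\sim|x-y|^{-2-2\alpha}$, and expand $\Delta[K_i]$ using the telescoping formula \eqref{aux22122a}. The differences produced fall into two groups. The ``global'' differences $\Delta[\g_i](x)-\Delta[\g_i](y)$ — which enter $\Delta[B_i]$ directly and $\Delta[C_i]$ through $\Delta[|\g_i(x)-\g_i(y)|^{2}]$ and the mean value theorem for $r\mapsto r^{-1-\alpha}$ — are rewritten, using $\p_x\g_i=g_i\T_i$, as $\int_y^x(\Delta[g_i](\tau)\T_1(\tau)+g_2(\tau)\Delta[\T_i](\tau))\,d\tau$ and bounded by $|x-y|(\mathcal M\Delta[g_i](z)+\mathcal M\Delta[\T_i](z))$, using that $[y,x]\subset[z-|x-y|,z+|x-y|]$ whenever $\max\{|z-x|,|z-y|\}\le|x-y|$. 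The ``diagonal'' differences — the bare $\Delta[\T_i](x)$ appearing in $\Delta[B_i]$ and the bare $\Delta[g_i](y)$ in the first telescoped term — I would treat by means of the unit-vector identity $\T_2(x)\cdot\Delta[\T_i](x)=\T_1(x)\cdot\T_2(x)-1=-\frac{1}{2}|\Delta[\T_i](x)|^{2}$ (which, after pairing with $\g_2(x)-\g_2(y)=\int_y^x g_2\T_2\,d\tau$, produces a genuinely quadratic term plus one carrying an extra $|x-y|^{\beta}$) together with the oddness already used for \eqref{eq:lemma:kernel_1}, and by comparing a pointwise value of $\Delta[g_i]$ or $\Delta[\T_i]$ along $[y,x]$ with its maximal function at $z$ up to the $C^{\beta}$-modulus, which is controlled by the a priori $C^{1,\beta}$ bounds on the two patches. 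Collecting all terms with the sizes $|A_i|\sim1$, $|B_i|\lesssim|x-y|$, $|C_i|\sim|x-y|^{-2-2\alpha}$ then produces \eqref{eq:lemma:kernel_2}.

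The main obstacle is \eqref{eq:lemma:kernel_2}: the telescoped expansion generates a sizeable number of terms, and the real work is to route each one so that it ends up with exactly the power $|x-y|^{-1-2\alpha}$ and a maximal-function weight. The ``diagonal'' contributions are the bottleneck — they must be handled via the odd-symmetry cancellation (the very mechanism that later drives the $L^2$ stability estimate of Section \ref{sec:stablity}), since the crude size bounds alone would leave a non-summable remainder.
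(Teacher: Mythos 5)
Your treatment of \eqref{eq:lemma:kernel_1} and \eqref{eq:lemma:kernel_3} is essentially the paper's: the symmetrized numerator $g_i(y)(\T_i(x)-\T_i(y))+(g_i(y)-g_i(x))\T_i(y)$ paired with the chord gives \eqref{eq:lemma:kernel_1}, and for the principal values the paper likewise splits off an odd model kernel and controls the rest by the $C^\beta$ moduli of $a$, $g_i$, $\T_i$ (it freezes coefficients by adding and subtracting rather than Taylor-expanding, but the mechanism is the same). Your decomposition of $\Delta[K_i]$ and the control of the ``global'' difference $\Delta[\g_i(x)-\g_i(y)]=\int_y^x(\Delta[g_i]\T_1+g_2\Delta[\T_i])\,d\tau$ by $|x-y|\,(\mathcal M\Delta[g_i](z)+\mathcal M\Delta[\T_i](z))$, including the mean value theorem for $r\mapsto r^{-1-\alpha}$ in the denominator term, is exactly the paper's \eqref{eq:aux_kernel_3}--\eqref{eq:aux_kernel_8}.

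The gap is in your handling of the ``diagonal'' terms of \eqref{eq:lemma:kernel_2}, i.e.\ the bare factors $\Delta[\T_i](x)$ and $\Delta[g_i](y)$. Estimate \eqref{eq:lemma:kernel_2} is a pointwise bound on the kernel difference at a single pair $(x,y)$; there is no integration against anything, so neither the odd-symmetry cancellation nor the identity $\T_2(x)\cdot\Delta[\T_i](x)=-\tfrac12|\Delta[\T_i](x)|^2$ can do any work here --- those devices belong to the integrated stability estimates (the terms $I_{11}$ and $J_1'$ in Propositions \ref{prop:tangent_diffenrence} and \ref{prop:metric_diffenrence}), not to this lemma. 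Worse, your fallback of comparing a pointwise value with the maximal function at $z$ ``up to the $C^\beta$-modulus controlled by the a priori bounds'' introduces an additive error $O(|x-y|^\beta)$ whose constant depends only on $\sup_i\|\Om_{i,t}\|_{C^{1,\beta}}$ and not on $\g_1-\g_2$; such a term cannot appear on the right of \eqref{eq:lemma:kernel_2}, which must vanish when the two flows coincide, and it would inject an inhomogeneity into the Gronwall inequality that kills uniqueness. The correct treatment is elementary: by the definition \eqref{eq:def_maximal_function}, a continuous function is dominated pointwise by its maximal function at the same point, so $|\Delta[\T_i](x)|\le\mathcal M\Delta[\T_i](x)$ and $|\Delta[g_i](y)|\le\mathcal M\Delta[g_i](y)$, and both $z=x$ and $z=y$ satisfy the constraint $\max\{|z-x|,|z-y|\}\le|x-y|$; combined with $|(\g_2(x)-\g_2(y))\cdot\Delta[\T_i](x)|\lesssim|x-y|\,|\Delta[\T_i](x)|$ this yields the weight $|x-y|^{-1-2\alpha}$ times a maximal function with no error term and no cancellation. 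Your closing diagnosis that the crude size bounds ``would leave a non-summable remainder'' misplaces the difficulty: summability is not at issue in this pointwise lemma, and the cancellation mechanism you invoke is deferred to the propositions where the kernel is actually integrated.
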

\begin{proof}
	The assumption gives the $C^{\beta }$ regularity of  $ g_i$. Thus the first estimate follows directly from the $C^{\beta }$ regularity of $\T$ and   $ g$:
	\begin{equation}\label{eq:aux_kernel_1}
		\begin{aligned}
			\Big|  K_i(y,x ) +  K_i(x , y )  \Big| & \leq   \Big|  g_i (y) \frac{(\g_i(x) - \g_i(y) ) \cdot ( \T_i(x) - \T_i (y)}{|\g_i(x) - \g_i(y)|^{2+2\alpha }}   \Big|  \\
			& \qquad + \Big| (g_i (y) - g_i(x)) \frac{(\g_i(x) - \g_i(y) ) \cdot   \T_i(y)  }{|\g_i(x) - \g_i(y)|^{2+2\alpha }}   \Big| \\
			& \lesssim  \frac{ |x-y| |x-y|^\beta }{|x-y|^{2+2\alpha}}   .
		\end{aligned}
	\end{equation}
	Next, for \eqref{eq:lemma:kernel_2} we telescope to obtain
	\begin{equation}\label{eq:aux_kernel_2}
		\begin{aligned}
			\Delta [K_i(y,x )  ] & = \frac{\Delta [ (\g_i(x) - \g_i(y) ) ] \cdot \T_1(x)}{|\g_1(x) - \g_1(y)|^{2+2\alpha }}  \\
			& +  \frac{ (\g_2(x) - \g_2(y) )  \cdot \Delta [\T_i(x) ] }{|\g_1(x) - \g_1(y)|^{2+2\alpha }} \\
			& + (\g_2(x) - \g_2(y) )  \cdot  \T_2(x)  \Delta [ \frac{  1 }{|\g_i(x) - \g_i(y)|^{2+2\alpha }}].
		\end{aligned}
	\end{equation}
	
	We first prove that if $|x-y|>0$ and $z\in \RR $ such that $\max\{|z-x|, |z - y| \} \leq |x- y|$, then
	\begin{equation}\label{eq:aux_kernel_3}
		| \Delta [ (\g_i(x) - \g_i(y) ) ] | \lesssim |x-y| \mathcal{M}\Delta [ g_i] (z) + \mathcal{M}\Delta [   \T_i(z) ] .
	\end{equation}
	Indeed, by the fundamental theorem of calculus and the maximal function we have
	\begin{equation}\label{eq:aux_kernel_4}
		\begin{aligned}
			| \Delta [ (\g_i(x) - \g_i(y) ) ] | & \leq  \int_y^x | \Delta [ g_i( y' ) \T_i(y') ]| \, dy' \\
			& \leq \int^{z+ 2|x-y|}_{z-2|x-y|} | \Delta [ g_i( y' ) \T_i(y') ]| \, dy' \\
			& \lesssim |x-y| \mathcal{M}\Delta [ g_i] (z) + \mathcal{M}\Delta [   \T_i(z) ] .
		\end{aligned}
	\end{equation}
	
	For the first term in \eqref{eq:aux_kernel_2}, by \eqref{eq:aux_kernel_4} we have that
	\begin{align}
		| \Delta [ (\g_i(x) - \g_i(y) ) ] \cdot \T_1(x) | & \leq  \int_y^x | \Delta [ g_i(z) \T_i(z) ]| \, dz \nonumber \\
		& \lesssim  |x-y| \mathcal{M}\Delta [ g_i] (z) + \mathcal{M}\Delta [   \T_i(z) ] \label{eq:aux_kernel_5} .
	\end{align}

	For the second term in \eqref{eq:aux_kernel_2}, we can bound it by
	\begin{align}
		|  (\g_2(x) - \g_2(y) )  \cdot \Delta [ \T_i(x) ]| & \leq  \int_y^x | g_2(z)  \T_2(z)   \cdot \Delta [ \T_i(x) ]| \, dz  \nonumber \\
		& \lesssim  |x-y|    \Delta [ \T_i(x) ] . \label{eq:aux_kernel_6}
	\end{align}
	
	For the third term in \eqref{eq:aux_kernel_2}, notice that the mean value theorem for $ f(z) = |z|^{-2-2\alpha}$ applied to points $z_i = \g_i(x) - \g_i(y) $ and the fact that $|\g_i(x) - \g_i(y)|\sim |x-y| $ implies that
	\begin{align}\label{eq:aux_kernel_7}
		\Big| \Delta [ \frac{  1 }{|\g_i(x) - \g_i(y)|^{2+2\alpha }}] \Big|  \lesssim \frac{ \big|   \Delta [ |\g_i(x) - \g_i(y)|  ] \big|  }{ | x - y |^{3+2\alpha }   }.
	\end{align}
	
	It follows from \eqref{eq:aux_kernel_4} that
	\begin{align}\label{eq:aux_kernel_8}
		\Big| \Delta [ \frac{  1 }{|\g_i(x) - \g_i(y)|^{2+2\alpha }}] \Big|  \lesssim   |x-y|^{-2 - 2\alpha } \Big( \mathcal{M}\Delta [ g_i] (z) + \mathcal{M}\Delta [   \T_i(z) ]\Big).
	\end{align}
	By \eqref{eq:aux_kernel_8}, the third term in \eqref{eq:aux_kernel_2} satisfies the bound
	\begin{align}\label{eq:aux_kernel_9}
		&	\Big|  (\g_2(x) - \g_2(y) )  \cdot  \T_2(x)  \Delta [ \frac{  1 }{|\g_i(x) - \g_i(y)|^{2+2\alpha }}] \Big|   \nonumber \\
		& \lesssim |x-y|^{-1 -2\alpha} \Big(    \mathcal{M} \Delta[g_i](z) + \mathcal{M} \Delta[\T_i](z) \Big).
	\end{align}
	
Putting together \eqref{eq:aux_kernel_2}, \eqref{eq:aux_kernel_5}, \eqref{eq:aux_kernel_6}, and \eqref{eq:aux_kernel_9} we obtain the bound for $\Delta [K_i]$:
	\begin{equation*} 
		|\Delta [K_i(y,x )  ]  | \lesssim |x-y|^{-1 -2\alpha} \Big(    \mathcal{M} \Delta[g_i](z) + \mathcal{M} \Delta[\T_i](z) \Big).
	\end{equation*}

	Finally, we prove \eqref{eq:lemma:kernel_3}. It suffices to only consider $ \int a(y) K_i (y,x ) \, dy $. First, we split the integral 
	\begin{equation}\label{eq:aux_kernel_10}
		\begin{aligned}
 	&	\int a(y) K_i (y,x ) \, dy   = 	    \int_{\TT} a(y)  g_i(y) \frac{(\g_i(x) - \g_i(y) )\cdot \T_i(x)}{|\g_i(x) - \g_i(y)|^{2+2\alpha }}   \, dy \Big| \\
			& =    a(x)    g_i(x)  \int_{\TT}  \frac{(\g_i(x) - \g_i(y) )\cdot \T_i(x)}{|\g_i(x) - \g_i(y)|^{2+2\alpha }}   \, dy   \\
&  \qquad +   \int_{\TT} (a(y) -a(x))  g_i(y) \frac{(\g_i(x) - \g_i(y) )\cdot \T_i(x)}{|\g_i(x) - \g_i(y)|^{2+2\alpha }}   \, dy   \\
& \qquad \qquad  +    \int_{\TT}  a(x)  ( g_i(y) - g_i(y))  \frac{(\g_i(x) - \g_i(y) )\cdot \T_i(x)}{|\g_i(x) - \g_i(y)|^{2+2\alpha }}   \, dy  .
		\end{aligned}
	\end{equation}
Using $|a(x)  - a(y) | \leq  |a|_{C^\beta } |x-y |^\beta $ and $|g_i(x)  - g_i(y) | \lesssim |x-y |^\beta $, the last two integrals are absolutely convergent and  satisfy the desired bound, and so we focus on the first term in \eqref{eq:aux_kernel_10}.

We further split the integral
\begin{equation} \label{eq:aux_kernel_11}
\begin{aligned}
&  a(x)    g_i(x)  \int_{\TT}  \frac{(\g_i(x) - \g_i(y) )\cdot \T_i(x)}{|\g_i(x) - \g_i(y)|^{2+2\alpha }}   \, dy \\
&=  a(x)  g_i(x)  \int_{\TT} \frac{(\g_i(x) - \g_i(y) )\cdot    \T_i(y) }{|\g_i(x) - \g_i(y)|^{2+2\alpha }} \, dy   \\
&\qquad +  a(x)  g_i(x)  \int_{\TT} \frac{(\g_i(x) - \g_i(y) )\cdot (\T_i(y) - \T_i(x))}{|\g_i(x) - \g_i(y)|^{2+2\alpha }} \, dy  . 
\end{aligned}
\end{equation}
Observe that the first integral (in the principal value sense) on the right-hand side of \eqref{eq:aux_kernel_11} vanishes and the second integral is absolute convergent thanks to the $C^{1,\beta}$ regularity of $\g$	with $\beta > 2\alpha $. So   \eqref{eq:lemma:kernel_3} is proved.

\end{proof}

\subsection{Stability of the tangent evolution}
We are in the position to derive stability estimates the tangent evolution for $C^{1, 2 \alpha + }$ flows of the patch solutions.

Given two flows $\g_i$ of two patch solutions $\Omega_{i,t}$, we use the tangent equations
\begin{equation}\label{eq:aux_tangent_i}
\begin{cases}
	\p_t \T_i =  \p_s v_i \cdot \N_i \N_i &\\
\T_i |_{t = 0 } =  \frac{ \p_x \g_i }{|\g_i |} |_{t = 0 } .
\end{cases}
\end{equation}
 
Taking the difference of \eqref{eq:aux_tangent_i} for $i=1,2$, multiplying by $ \Delta [\T_i ] = \T_1 -\T_2$, and then integrating in space-time, we have
\begin{equation}
\begin{aligned}
	\frac{1}{2}|\T_1(t)  - \T_2 (t)|_{L^2}^2  - & \frac{1}{2}|\T_1(0)  - \T_2 (0)|_{L^2}^2   \\
& \leq 	  \int_{0}^t\int_{\TT} \Delta [ \p_s v_i \cdot \N_i ] \N_1  \cdot \Delta [  \T_i    ] (t)\,dx\,dt   \\
	& \qquad + \int_{0}^t\int_{\TT} \p_s v_2 \cdot \N_2  \Delta [ \N_i ]  \cdot \Delta [  \T_i   ] (t)  \,dx\,dt .
\end{aligned}
\end{equation}

This is the subject of the following proposition.

\begin{proposition}\label{prop:tangent_diffenrence}
Let $ 0 < \alpha < \frac{1}{2}$ and $ \beta  > 2\alpha $. Consider two $ C^{1,\beta }$ flows $\g_i : \TT \times [0,T] \to \RR^2$, $i=1,2$ of two   corresponding $  C^{1,\beta }$ patch solutions $\Om_{i, t}$. Let $\T_i$ and $g_i$ be the tangent and arc-length of each $\g_i$.

For any $t \in [0,T]$, there holds 
\begin{equation}\label{eq:tangent_diffenrence}
	\begin{aligned}
		|\T_1(t)  - \T_2 (t)|_{L^2}^2   \leq & |\T_1(0)  - \T_2 (0)|_{L^2}^2  \\
		&+  C \int_{0}^t|\T_1(\tau )  - \T_2 (\tau )|_{L^2}^2  +   |g_1(\tau  )  - g_2 ( \tau  )|_{L^2}^2  \,d \tau   .
	\end{aligned}
\end{equation}
\end{proposition}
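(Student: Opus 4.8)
The plan is to control the two integrals appearing after the energy estimate for $\Delta[\T_i]$ using the kernel formula \eqref{eq:estimate_v_W2p_Lagran} together with Lemma \ref{lemma:kernel}. First I would rewrite $\p_s v_i \cdot \N_i$ in the Lagrangian label as a principal value integral against the kernel $K_i(y,x)$; since $\N_i = -\T_i^\perp$ and $K_i$ already carries the factor $(\g_i(x)-\g_i(y))\cdot\T_i(x)/|\g_i(x)-\g_i(y)|^{2+2\alpha}$, the normal component $\p_s v_i\cdot\N_i(x) = \mathrm{P.V.}\int_{\TT} \N_i(x)\cdot\T_i(y)\, \widetilde K_i(y,x)\,dy$ for a scalar kernel $\widetilde K_i$ of the same size, and the key point is that the leading singular part is the one killed by oddness in \eqref{eq:lemma:kernel_3}. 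So the quantity $\p_s v_i\cdot\N_i$ is itself bounded in $C^{\beta-2\alpha}$, and in particular in $L^\infty$, uniformly on $[0,T]$; this immediately handles the \emph{second} integral, since $|\p_s v_2\cdot\N_2|\lesssim 1$, $|\Delta[\N_i]| = |\Delta[\T_i]|$ pointwise, and hence that term is $\lesssim \int_0^t |\Delta[\T_i](\tau)|_{L^2}^2\,d\tau$ by Cauchy--Schwarz.

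The main work is the \emph{first} integral, $\int_0^t\int_{\TT}\Delta[\p_s v_i\cdot\N_i]\,\N_1\cdot\Delta[\T_i]\,dx\,dt$. I would expand $\Delta[\p_s v_i\cdot\N_i](x)$ by telescoping: it equals $\mathrm{P.V.}\int_{\TT}\big(\Delta[\N_i(x)\cdot\T_i(y)]\big)K_1(y,x)\,dy + \mathrm{P.V.}\int_{\TT}(\N_2(x)\cdot\T_2(y))\,\Delta[K_i(y,x)]\,dy$, modulo keeping careful track of which copy of the kernel/tangent gets index $1$ versus $2$ (this is exactly the telescoping \eqref{aux22122a}). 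For the first piece, $\Delta[\N_i(x)\cdot\T_i(y)] = \Delta[\N_i](x)\cdot\T_1(y) + \N_2(x)\cdot\Delta[\T_i](y)$; the term with $\Delta[\N_i](x) = -\Delta[\T_i]^\perp(x)$ pulls out of the $y$-integral and, using \eqref{eq:lemma:kernel_3} with $a\equiv\T_1$ (or $a = \T_1 - \T_1(x)$ after subtracting the value at $x$), contributes $\lesssim \|\Delta[\T_i](x)\|\cdot 1$, which is $L^2$-acceptable; the term with $\Delta[\T_i](y)$ inside requires writing $\Delta[\T_i](y) = \Delta[\T_i](x) + (\Delta[\T_i](y)-\Delta[\T_i](x))$, using oddness of the kernel for the $\Delta[\T_i](x)$ part and the $C^\beta$-type bound $|\Delta[\T_i](y)-\Delta[\T_i](x)|\lesssim |x-y|^\sigma$ (which holds since $\g_i\in C^{1,\beta}$ flows, so $\T_i\in C^\sigma$ with $\sigma = \beta - 2\alpha$ after the flow loss, or just $C^\beta$ from the patch regularity — either way against $|x-y|^{-1+\beta-2\alpha}$ it is integrable) combined with $\|\Delta[\T_i]\|_{C^\sigma}$ being controlled; one then absorbs this into $\|\Delta[\T_i]\|_{L^2}^2$ via an interpolation/Grönwall-friendly argument, or more simply bounds it by $\|\Delta[\T_i]\|_{L^2}\cdot\|\Delta[\T_i]\|_{L^2}$ after noticing the $y$-integral of the kernel against a $C^\beta$ bump is $O(1)$. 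For the second piece I would use \eqref{eq:lemma:kernel_2}: choosing $z$ between $x$ and $y$, $|\Delta[K_i(y,x)]|\lesssim |x-y|^{-1-2\alpha}\big(\mathcal M\Delta[g_i](z) + \mathcal M\Delta[\T_i](z)\big)$; integrating in $y$ over $\TT$ against this kernel, and using that $\mathcal M$ is bounded on $L^2$, gives $\big|\int_{\TT}(\N_2(x)\cdot\T_2(y))\Delta[K_i(y,x)]\,dy\big|$ controlled after a further $x$-integration by $\|\Delta[g_i]\|_{L^2} + \|\Delta[\T_i]\|_{L^2}$, where one has to be slightly careful that $-1-2\alpha < 0$ only gives a \emph{locally} integrable kernel in $|x-y|$ near the diagonal after the $z$-substitution — here one splits into $|x-y|\le 1$ and $|x-y|>1$, the latter being harmless by boundedness of everything on the torus, and the former handled by the maximal function bound exactly as in the proof of Lemma \ref{lemma:kernel}. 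Pairing with $\N_1\cdot\Delta[\T_i]$ and Cauchy--Schwarz in $x$ then yields $\lesssim \|\Delta[\T_i]\|_{L^2}\big(\|\Delta[\T_i]\|_{L^2} + \|\Delta[g_i]\|_{L^2}\big)$, and integrating in time and applying Young's inequality $ab\le\frac12 a^2 + \frac12 b^2$ produces the right-hand side of \eqref{eq:tangent_diffenrence}.

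The step I expect to be the genuine obstacle is the principal-value bookkeeping in the first piece: making sure that after telescoping the singular integral, the \emph{only} non-absolutely-convergent contribution is one whose kernel is odd in $(x-y)$ so that it vanishes in the P.V. sense, and that all remaining pieces are integrable with the $C^\sigma$ regularity actually available to the flows (which is $\beta - 2\alpha$, not $\beta$, by the loss in Theorem \ref{thm:flow_existence} — so one must check $\sigma > 0$ and that $-1+\sigma - 2\alpha$... no: one checks $\sigma$ against $|x-y|^{-1+\beta - 2\alpha}$ and $|x-y|^{-1-2\alpha}$, and since $\beta - 2\alpha > 0$ the first is integrable while the second needs the $\mathcal M$ trick). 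Everything else — the $L^2$ energy identity, Cauchy--Schwarz, boundedness of $\mathcal M$ on $L^2$, and Young's inequality — is routine; the continuity of $t\mapsto \delta(t)$ needed for the final Theorem \ref{thm:uniqueness_by_flow} statement follows from the continuity-in-time of $\g_i, g_i, \T_i$ in $C^0(\TT)\hookrightarrow L^2(\TT)$ coming from the flow regularity $\g_i\in C([0,T];C^{1,\sigma})$.
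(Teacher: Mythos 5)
Your overall architecture matches the paper's: the same energy estimate, the same telescoping of $\Delta[\p_s v_i\cdot\N_i]$ into a piece where the difference falls on $\T_i(y)\cdot\N_i(x)$ against $K_1$ and a piece with $\Delta[K_i]$, and the same use of Lemma \ref{lemma:kernel}. But there are two genuine gaps, one of which is the heart of the proposition.

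The main gap is your treatment of the term $\int_{\TT}\int_{\TT}\N_2(x)\cdot\Delta[\T_i](y)\,K_1(y,x)\,\N_1(x)\cdot\Delta[\T_i](x)\,dy\,dx$ (the paper's $I_{11}$). You propose to write $\Delta[\T_i](y)=\Delta[\T_i](x)+(\Delta[\T_i](y)-\Delta[\T_i](x))$, kill the first part by oddness, and control the remainder using $|\Delta[\T_i](y)-\Delta[\T_i](x)|\lesssim|x-y|^\sigma$ ``combined with $\|\Delta[\T_i]\|_{C^\sigma}$ being controlled.'' This does not close: the H\"older seminorm of the \emph{difference} is only bounded by the a priori regularity of the two flows, i.e. by an $O(1)$ constant, not by $\delta(t)$. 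The remainder term is therefore $O(1)\cdot\|\Delta[\T_i]\|_{L^2}$, which is linear rather than quadratic in the small quantity and destroys the Gr\"onwall loop; interpolating $C^{\sigma'}$ between $L^2$ and $C^\sigma$ only yields $\delta^\theta$ with $\theta<1$, which is not Osgood-closable from $\delta(0)=0$. Your fallback (``the $y$-integral of the kernel against a $C^\beta$ bump is $O(1)$'') has the same defect: \eqref{eq:lemma:kernel_3} pays $|a|_{C^\beta}$, which for $a=\Delta[\T_i]$ is again $O(1)$. The paper's resolution is different and is the advertised ``odd symmetry'' ingredient: first replace $\N_1(x)$ by $\N_1(y)$ (the commutator gains $|x-y|^\beta$ and is harmless), so that the non-kernel factor becomes symmetric under $x\leftrightarrow y$; then \emph{symmetrize the double integral}, replacing $K_1(y,x)$ by $\tfrac12[K_1(y,x)+K_1(x,y)]$, and invoke \eqref{eq:lemma:kernel_1}, which says this symmetrized kernel is only $|x-y|^{-1+\beta-2\alpha}$ and hence integrable since $\beta>2\alpha$. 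Schur's test then gives $\lesssim\|\Delta[\T_i]\|_{L^2}^2$ with both factors in $L^2$. Nothing in your write-up performs this symmetrization.

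The second, smaller gap is in the $\Delta[K_i]$ piece. Estimate \eqref{eq:lemma:kernel_2} gives $|x-y|^{-1-2\alpha}$ times maximal functions, and $|x-y|^{-1-2\alpha}$ is \emph{not} locally integrable in one dimension (you assert it is); no maximal-function argument removes that singularity. The integral is saved by the prefactor $\T_2(y)\cdot\N_2(x)=O(|x-y|^{\beta})$, which upgrades the kernel to $|x-y|^{-1+\beta-2\alpha}$ before Young's inequality and the $L^2$-boundedness of $\mathcal M$ are applied. You carry this prefactor along but never use its vanishing on the diagonal, which is exactly what is needed. Your handling of the second integral (via $L^\infty$ boundedness of $\p_s v_2\cdot\N_2$) and of the $\Delta[\N_i](x)$ term via \eqref{eq:lemma:kernel_3} agrees with the paper and is fine.
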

\begin{proof}

By the boundedness  of $\p_s v$ due to Lemma \ref{lemma:estimate_dv}, it suffices to show
\begin{equation} \label{eq:aux_deltaT_1}
| I  | 
\lesssim  | \T_1   - \T_2  |_{L^2}^2 ,
\end{equation}
with
\begin{equation} \label{eq:aux_deltaT_2}
\begin{aligned}
I  & : = \int_{\TT} \Delta  [ \p_s v_i \cdot \N_i ] \N_1  \cdot   \Delta  [\T_i ] \,dx\\
& = \int_{\TT} \int_{\TT}  \Delta  [  \T_i (y)\N_i (x)   K_i(y,x ) ] \, \N_1(x)     \cdot  \Delta  [\T_i(x)] \,dy  \, dx,
\end{aligned}
\end{equation}
where we recall the kernels $K_i$
\begin{equation}\label{eq:aux_deltaT_4}
K_i(y,x ) =  g_i(y) \frac{(\g_i(x) - \g_i(y) )\cdot \T_i(x)}{|\g_i(x) - \g_i(y)|^{2+2\alpha }}.
\end{equation}

Telescoping \eqref{eq:aux_deltaT_2}, we have
\begin{equation}\label{eq:aux_deltaT_5}
\begin{aligned}
I & = I_1 +  I_2  ,
\end{aligned}
\end{equation}
where
\begin{equation}\label{eq:aux_deltaT_6}
\begin{aligned}
I_1 &=   \int_{\TT} \int_{\TT}  \Delta  [  \T_i (y)\cdot \N_i (x)]   K_1(y,x ) \, \N_1(x)      \cdot  \Delta  [\T_i(x)] \,dy  \, dx \\ 
I_2 &  =   \int_{\TT} \int_{\TT}    \T_2 (y) \cdot  \N_2 (x)   \Delta  [  K_i(y,x )] \, \N_1(x)    \cdot  \Delta  [\T_i(x)] \,dy  \, dx . 
\end{aligned}
\end{equation}

The task is then to estimate $I_1$ and $I_2$.

\noindent
\textbf{Estimate of $I_1$:}

We first reduce $I_1$ to estimating
\begin{equation}\label{eq:aux_deltaT_9}
I_1'   =   \int_{\TT} \int_{\TT}  \Delta  [  \T_i (y)\cdot \N_i (x)]  K_1(y,x ) \,   \underbrace{\N_1(y) }_\text{was $\N_1(x)$ in $I_1$}     \cdot  \Delta  [\T_i(x)] \,dy  \, dx
\end{equation}
since the commuting term has one extra factor $|\N_1(y) -\N_1(x) | \lesssim |x - y|^{\beta}$ making the kernel integrable.

A further telescoping gives the decomposition
\begin{equation}\label{eq:aux_deltaT_10}
I_1' = I_{11} + I_{12}
\end{equation}
with
\begin{equation}\label{eq:aux_deltaT_11}
I_{11} = \int_{\TT}\int_{\TT}  \Delta  [  \T_i (y)] \cdot \N_1 (x)  K_1(y,x ) \, \N_1(y)      \cdot  \Delta  [\T_i(x)] \,dy  \, dx ,
\end{equation}
and 
\begin{equation}\label{eq:aux_deltaT_12}
I_{12} = \int_{\TT} \int_{\TT} \T_2 (y)\cdot  \Delta  [   \N_i (x)]  K_1(y,x ) \, \N_1(y)      \cdot  \Delta  [\T_i(x)] \,dy  \, dx .
\end{equation}

For $I_{11}$, we first symmetrize the integral \eqref{eq:aux_deltaT_11} 
\begin{equation}\label{eq:aux_deltaT_13}
I_{11} = \frac{1}{2} \int_{\TT} \int_{\TT}  \Delta  [  \T_i (y)] \cdot \N_1 (x)   [  K_1(y,x ) +   K_1(x,y ) ]  \, \N_1(y)      \cdot  \Delta  [\T_i(x)] \,dy  \, dx,
\end{equation}
and then use \eqref{eq:lemma:kernel_1}  from Lemma \ref{lemma:kernel} to obtain
\begin{equation}
|I_{11}| \lesssim \big|\Delta  [\T_i ] \big|_{L^2}^2  .
\end{equation}

For $I_{12}$ we use H\"older 
\begin{equation}
|I_{12}| \leq \big|\Delta  [\T_i ] \big|_{L^2}^2 \sup_x \Big| \int_{\TT}    \T_2 (y)  \N_1(y)     K_1(y,x )    \,dy  \Big|  .
\end{equation}
and use \eqref{eq:lemma:kernel_3} from Lemma \ref{lemma:kernel} to conclude that it obeys the sought estimate due to the $C^\beta$ regularity of $\T,\N$.

\noindent
\textbf{Estimate of $I_2$:}

By \eqref{eq:lemma:kernel_2} from Lemma \ref{lemma:kernel} we have
\begin{equation}\label{eq:aux_deltaT_7}
|I_3| \lesssim   \int_{\TT} \int_{\TT}    |x-y|^{-1+\beta -2\alpha} \Big(  \mathcal{M} \Delta[g_i](y) + \mathcal{M} \Delta[\T_i](y)   \Big) \big|   \Delta  [\T_i(x)]\big|    \,dy  \, dx    .
\end{equation}
Thus by Young's inequality (thanks to  $\beta > 2\alpha $)and $L^2$-boundedness of the maximal function, we have
\begin{equation}\label{eq:aux_deltaT_8}
|I_3| \lesssim  \Big[  \big| \Delta[g_i] \big |_{L^2}  +  \big| \Delta[\T_i] \big|_{L^2}   \Big]  \big|\Delta  [\T_i ] \big|_{L^2} 
\end{equation}
which   satisfies the claimed bound.

\end{proof}

\subsection{Stability of the arc-length evolution}

Next, we show the same stability estimate for the arc-length difference and trajectory difference.
\begin{proposition}\label{prop:metric_diffenrence}
Under the setting of Proposition \ref{prop:tangent_diffenrence}, there holds
\begin{equation}\label{eq:metric_diffenrence}
	\begin{aligned}
		| g_1(t)  - g_2 (t)|_{L^2}^2  \leq &| g_1(0)  -  g_2 (0)|_{L^2}^2 \\
		&  + C \int_{0}^t|\T_1( \tau )  - \T_2 ( \tau )|_{L^2}^2  +   |g_1( \tau )  - g_2 ( \tau )|_{L^2}^2  \,d \tau ,
	\end{aligned}
\end{equation}
and
\begin{equation}\label{eq:trajectory_diffenrence}
	\begin{aligned}
		| \g_1(t)  - \g_2 (t)|_{L^2}^2  \leq &| \g_1(0)  - \g_2 (0)|_{L^2}^2 \\
		&  + C \int_{0}^t|\T_1(\tau)  - \T_2 (\tau)|_{L^2}^2  +   |g_1(\tau)  - g_2 (\tau)|_{L^2}^2  \,d \tau .
	\end{aligned}
\end{equation}
\end{proposition}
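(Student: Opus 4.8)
The plan is to derive both estimates by differentiating the $L^2$ quantities in time along the flow equations and bounding the resulting bilinear integrals, closely paralleling the proof of Proposition~\ref{prop:tangent_diffenrence}; since the flows are $C^1$ in $t$, these $L^2$ quantities are absolutely continuous in $t$, so it suffices to establish the corresponding differential inequalities. I would handle the trajectory difference \eqref{eq:trajectory_diffenrence} first, as it is the gentler one. From $\p_t\g_i=v_i(\g_i)$ and the boundary Biot--Savart formula \eqref{eq:aux_BS_gamma}, $v_i(\g_i(x))=-\tfrac{1}{2\alpha}\int_\TT|\g_i(x)-\g_i(y)|^{-2\alpha}\T_i(y)g_i(y)\,dy$, one has $\tfrac{d}{dt}|\g_1-\g_2|_{L^2}^2=2\int_\TT\Delta[\g_i]\cdot\Delta[v_i(\g_i)]\,dx$, and telescoping $\Delta[v_i(\g_i)]$ via \eqref{aux22122a} splits it into a part where $\T_i(y)g_i(y)$ is differenced---here the kernel $|x-y|^{-2\alpha}$ lies in $L^1(\TT)$ (as $\alpha<\tfrac12$), so Young's inequality controls the contribution by $\lesssim(|\Delta[\T_i]|_{L^2}+|\Delta[g_i]|_{L^2})|\Delta[\g_i]|_{L^2}$---and a part where $|\g_i(x)-\g_i(y)|^{-2\alpha}$ is differenced. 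For the latter I would use the mean value theorem for the scalar map $t\mapsto t^{-2\alpha}$ at $t=|\g_i(x)-\g_i(y)|\sim|x-y|$ together with the reverse triangle inequality and the bound $|\Delta[\g_i(x)]-\Delta[\g_i(y)]|=\big|\int_y^x\Delta[g_i\T_i]\,d\tau\big|\lesssim|x-y|\big(\mathcal{M}\Delta[g_i](z)+\mathcal{M}\Delta[\T_i](z)\big)$ from \eqref{eq:aux_kernel_4}; this gains a factor $|x-y|$, again leaving an integrable kernel, so $L^2$-boundedness of $\mathcal{M}$ finishes. As this representation yields only $\Delta[g_i]$ and $\Delta[\T_i]$, one in fact obtains $|\Delta[v_i(\g_i)]|_{L^2}\lesssim|\Delta[g_i]|_{L^2}+|\Delta[\T_i]|_{L^2}$, from which \eqref{eq:trajectory_diffenrence} follows.

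For the arc-length difference \eqref{eq:metric_diffenrence} I use the evolution equation \eqref{eq:metric_evolution}, giving $\tfrac{d}{dt}|g_1-g_2|_{L^2}^2=2\int_\TT\Delta[g_i\,(\p_sv_i\cdot\T_i)]\,\Delta[g_i]\,dx$, and telescope $\Delta[g_i\,(\p_sv_i\cdot\T_i)]=\Delta[g_i]\,(\p_sv_1\cdot\T_1)+g_2\,\Delta[\p_sv_i\cdot\T_i]$; the first summand is bounded by $|\Delta[g_i]|_{L^2}^2$ thanks to the boundedness of $\p_sv_1$ from Lemma~\ref{lemma:estimate_dv}, so everything reduces to $|\Delta[\p_sv_i\cdot\T_i]|_{L^2}\lesssim|\Delta[g_i]|_{L^2}+|\Delta[\T_i]|_{L^2}$. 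The new difficulty relative to Proposition~\ref{prop:tangent_diffenrence} is that, whereas in $\p_sv_i\cdot\N_i$ the factor $\T_i(y)\cdot\N_i(x)=O(|x-y|^\beta)$ already renders the kernel absolutely integrable, the quantity $\p_sv_i(x)\cdot\T_i(x)=\mathrm{P.V.}\int_\TT(\T_i(y)\cdot\T_i(x))K_i(y,x)\,dy$ is a genuine principal value. To remove it I would insert $\T_i(x)=\T_i(y)+(\T_i(x)-\T_i(y))$ into $K_i$ and use the identity $g_i(y)\,\frac{(\g_i(x)-\g_i(y))\cdot\T_i(y)}{|\g_i(x)-\g_i(y)|^{2+2\alpha}}=\tfrac{1}{2\alpha}\,\p_y\big[|\g_i(x)-\g_i(y)|^{-2\alpha}\big]$, a perfect $y$-derivative whose principal-value integral over the periodic $\TT$ vanishes; together with $\T_i(y)\cdot\T_i(x)=1-\tfrac12|\T_i(x)-\T_i(y)|^2$ this rewrites $\p_sv_i\cdot\T_i=\int_\TT H_i(y,x)\,dy$ as an ordinary integral against a kernel with $|H_i(y,x)|\lesssim|x-y|^{-1+\beta-2\alpha}$, which belongs to $L^1(\TT)$ precisely because $\beta>2\alpha$.

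It then remains to bound $\Delta[H_i]$. Telescoping $H_i$ in the manner of the proof of Lemma~\ref{lemma:kernel} and rewriting each raw difference $\g_i(x)-\g_i(y)$ and $\T_i(x)-\T_i(y)$ through $\g_i(x)-\g_i(y)=\int_y^x g_i\T_i\,d\tau$ and the polarization identity $\T_i(\tau)\cdot(\T_i(x)-\T_i(y))=\tfrac12\big(|\T_i(\tau)-\T_i(y)|^2-|\T_i(\tau)-\T_i(x)|^2\big)$---the latter keeping the $|x-y|^{2\beta}$-smallness visible even after differencing---I expect to reach $|\Delta[H_i(y,x)]|\lesssim|x-y|^{-1+\beta-2\alpha}\big(\mathcal{M}\Delta[g_i]+\mathcal{M}\Delta[\T_i]\big)$ with the maximal functions evaluated at $x$ or $y$. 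Then, exactly as in the estimate of $I_2$ in Proposition~\ref{prop:tangent_diffenrence}, Young's inequality (using $|x-y|^{-1+\beta-2\alpha}\in L^1$) and the $L^2$-boundedness of $\mathcal{M}$ give $|\Delta[\p_sv_i\cdot\T_i]|_{L^2}\lesssim|\Delta[g_i]|_{L^2}+|\Delta[\T_i]|_{L^2}$, and \eqref{eq:metric_diffenrence} follows by integrating in time. I expect this last step to be the main obstacle: because $H_i$ contains differences of the tangent vector at a pair of points, a careless expansion produces the non-integrable kernel $|x-y|^{-1-2\alpha}$, and at every stage one must peel off either an extra power $|x-y|^\beta$ from the $C^{1,\beta}$ regularity or a maximal-function factor from an integration over $[y,x]$ before estimating---this is exactly where the hypothesis $\beta>2\alpha$ is needed and where the bookkeeping is heaviest.
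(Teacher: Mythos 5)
Your argument is correct, and the trajectory estimate \eqref{eq:trajectory_diffenrence} is obtained exactly as in the paper (integrate the flow equation, telescope the parameterized Biot--Savart integral, and use the integrability of $|x-y|^{-2\alpha}$ together with the maximal-function bound on $\Delta[\g_i(x)-\g_i(y)]$). For the arc-length estimate \eqref{eq:metric_diffenrence}, however, you take a genuinely different route. The paper never bounds $\Delta[\p_s v_i\cdot\T_i]$ by itself: it pairs it against $g_1\,\Delta[g_i]$ and estimates the resulting double integral through a decomposition $J_1+J_2+J_3$, taming the singular piece $J_1$ with the identity $\Delta[\T_i](y)\cdot\T_1(y)=\tfrac12|\Delta[\T_i](y)|^2$ and disposing of $J_2,J_3$ with the kernel bounds \eqref{eq:lemma:kernel_1}--\eqref{eq:lemma:kernel_3} of Lemma~\ref{lemma:kernel}. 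You instead prove the stronger, pairing-independent statement $|\Delta[\p_s v_i\cdot\T_i]|_{L^2}\lesssim|\Delta[g_i]|_{L^2}+|\Delta[\T_i]|_{L^2}$ by first rewriting $\p_s v_i\cdot\T_i$ as an absolutely convergent integral against a kernel $H_i=O(|x-y|^{-1+\beta-2\alpha})$ --- using the perfect-derivative structure $\tfrac1{2\alpha}\p_y|\g_i(x)-\g_i(y)|^{-2\alpha}$ (the same cancellation the paper exploits inside \eqref{eq:aux_kernel_11}) together with $\T(x)\cdot\T(y)=1-\tfrac12|\T(x)-\T(y)|^2$ --- and only then differencing. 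You correctly identify the one term that would wreck this plan, namely $(\g_2(x)-\g_2(y))\cdot\Delta[\T_i(x)-\T_i(y)]$ over $|x-y|^{2+2\alpha}$, and your fix via $\T(\tau)\cdot(\T(x)-\T(y))=\tfrac12\bigl(|\T(\tau)-\T(y)|^2-|\T(\tau)-\T(x)|^2\bigr)$ under $\int_y^x g\T\,d\tau$ does recover the extra factor $|x-y|^{\beta}$; it is the exact analogue of the paper's oddness/polarization trick, and both approaches ultimately hinge on $|\T_i|\equiv1$. What each buys: your route yields a reusable $L^2$ bound on the velocity-gradient difference at the cost of heavier kernel bookkeeping; the paper's route is shorter because the test function $g_1\Delta[g_i]$ is already uniformly $C^{\beta}$, so \eqref{eq:lemma:kernel_3} absorbs one singular integration for free. (Minor point: your Gronwall inequality for \eqref{eq:trajectory_diffenrence} also produces a harmless $|\Delta[\g_i]|_{L^2}^2$ on the right-hand side, which is absorbed into $\delta(t)$ in Theorem~\ref{thm:uniqueness_by_flow} anyway.)
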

\begin{proof}

We first prove the bound \eqref{eq:metric_diffenrence} for $\g_i$.
By the integral equation  of the flows  $\g_i $,
\begin{align*}
| \g_1 (t) -\g_2  (t) |_{L^2} & \leq  | \g_1(0)  - \g_2 (0)|_{L^2}^2 \\
&\qquad + \int_0^t \Big| \int_{\TT} \Delta \Big[ \T_i (y) g_i(y) \frac{1}{  |\g_i (x) - \g_i(y)|^{2\alpha } } \Big] \,dy \Big|_{L^2}.
\end{align*}

Once again, dropping the time variable we need to prove
\begin{equation}\label{eq:aux_traj_diff_1}
 \Big| \int_{\TT} \Delta \Big[ \T_i (y) g_i(y) \frac{1}{  |\g_i (x) - \g_i(y)|^{2\alpha } } \Big] \,dy \Big|_{L^2}  \lesssim   |\T_1   - \T_2  |_{L^2}^2  +   |g_1  - g_2 |_{L^2}^2 .
\end{equation}

Since $2\alpha<1$, the kernel $\frac{1}{  |\g_i (x) - \g_i(y)|^{2\alpha } }$ is integrable, and so by telescope \eqref{eq:aux_traj_diff_1} it suffices to prove 
$$
 \Big| \int_{\TT}  \T_2 (y) g_2(y) \Delta \Big[ \frac{1}{  |\g_i (x) - \g_i(y)|^{2\alpha } } \Big] \,dy \Big|_{L^2} \lesssim   |\T_1   - \T_2  |_{L^2}^2  +   |g_1  - g_2 |_{L^2}^2 .
$$
This estimate relies on the following bound
\begin{equation}
		\Big| \Delta [ \frac{  1 }{|\g_i(x) - \g_i(y)|^{ 2\alpha }}] \Big|  \lesssim   |x-y|^{ - 2\alpha } \Big( \mathcal{M}\Delta [ g_i] (x) + \mathcal{M}\Delta [   \T_i(x) ]\Big),
\end{equation}
whose proof follows from a small modification of the argument used for \eqref{eq:aux_kernel_8}

Next, for  $g_1 -g_2 $, similarly to $\T_1 -\T_2$  we  will bound the following inequality
\begin{equation*}
\begin{aligned}
	\frac{1}{2}| g _1(t)  - g_2 (t)|_{L^2}^2 & \leq \frac{1}{2}| g _1(0)  - g_2 (0)|_{L^2}^2  \\
& \qquad +  \int_{0}^t\int_{\TT} \Delta [   \p_s v_i \cdot \T_i  g_i ]    ( g_1(t)  - g_2 (t))  \,dx\,dt  .  
\end{aligned}
\end{equation*}
Again, dropping the time variable   it suffices to show 
\begin{equation}\label{eq:aux_trajectory_diffenrence_3}
\Big|  \int_{\TT} \Delta [   \p_s v_i \cdot \T_i  ]  g_1    ( g_1   - g_2 )  \,dx  \Big| \lesssim  |\T_1   - \T_2  |_{L^2}^2  +   |g_1  - g_2 |_{L^2}^2 .
\end{equation}
Telescoping the factor $\Delta [   \p_s v_i \cdot \T_i  ]$ and appealing the the definition of $K_i$, we have
\begin{equation}
\Big|  \int_{\TT} \Delta [   \p_s v_i \cdot \T_i     ]    g_1 ( g_1   - g_2 )  \,dx  \Big| = J_1 + J_2+ J_3
\end{equation}
where
\begin{align}
J_1 & =  \int_{\TT}\int_{\TT}  \Delta  [  \T_i (y)] \cdot \T_1 (x)   K_1(y,x )  g_1(x)    \Delta  [g_i(x)] \,dy  \, dx\\
J_2  & =  \int_{\TT}\int_{\TT}    \T_2 (y) \cdot \Delta  [ \T_i (x)   ] K_1(y,x )  g_1(x)    \Delta  [g_i(x)] \,dy  \, dx\\
J_3 & = \int_{\TT} \int_{\TT} \T_2 (y) \cdot \T_2 (x)   \Delta  [   K_i(y,x )     ] g_1(x)    \Delta  [g_i(x)] \,dy  \, dx .
\end{align}

\noindent
\textbf{Estimate of $J_1$:}

Similar to the case of $I_1$ in the Proposition \ref{prop:tangent_diffenrence},  we can first reduce $J_1$ to
\begin{align*}
J_1'   & = \int_{\TT} \int_{\TT}  \Delta  [  \T_i (y)] \cdot \T_1 (y)   K_1(y,x )  g_1(x)    \Delta  [g_i(x)] \,dy  \, dx
\end{align*}
since the error term has an extra factor $ \T_1 (x)  -\T_1 (y) $.

By the identity $    \Delta  [  \T_i (y)]  \cdot \T_1 (y) =    \frac{1}{2}  ( \T_1 (y) -\T_2 (y))^2    $,  
\begin{align*}
J_1' & =   \frac{1}{2} \int_{\TT} \int_{\TT}   ( \T_1 (y) -\T_2 (y))^2     K_1(y,x )  g_1(x)    \Delta  [g_i(x)] \,dy  \, dx. 
\end{align*}

Then we use \eqref{eq:lemma:kernel_3} from Lemma \ref{lemma:kernel} for the $x$ variable together with H\"older's inequality in $y$ to obtain
\begin{equation}
\begin{aligned}\label{eq:aux_trajectory_diffenrence_4}
 \big| J_1'\big|  & \lesssim | \Delta  [  \T_i  ]    |_{L^2}^2 \sup_i   \big| \int_{\TT}      K_1(y,x )      \Delta  [g_i(x)]    \, dx  \\
& \lesssim   \big| \Delta  [  \T_i  ]    \big| |_{L^2}^2. 
\end{aligned}
\end{equation}

\noindent
\textbf{Estimate of $J_2$:}

For this integral, the estimates follows directly from H\"older's inequality in the $x$ variable and \eqref{eq:lemma:kernel_3} from Lemma \ref{lemma:kernel}:
\begin{equation}\label{eq:aux_trajectory_diffenrence_5}
\begin{aligned}
 |J_2| & \lesssim |\Delta [\T_i ] |_{L^2} |\Delta [g_i ]|_{L^2}  \sup_{x} \Big|   \int   \T_2 (y)  K_1(y,x )  g_1(y)      \,dy  \Big|\\
& \lesssim |\Delta [\T_i ] |_{L^2}^2 +  |\Delta [g_i ]|_{L^2}^2  .
\end{aligned}
\end{equation}

\noindent
\textbf{Estimate of $J_3$:}

By \eqref{eq:lemma:kernel_2} from Lemma \ref{lemma:kernel}, we have that 
\begin{equation}\label{eq:aux_trajectory_diffenrence_6}
\begin{aligned}
| J_3 | & \lesssim\int_{\TT} \int_{\TT}   |x-y|^{\beta } \Big|  \Delta  [   K_i(y,x )     ]     \Big|  \Big| \Delta  [g_i(x)] \Big| \,dy  \, dx\\
&  \lesssim \int_{\TT}\int_{\TT}  |x-y|^{-1 +\beta -2\alpha} \Big(    \mathcal{M} \Delta[g_i](y) + \mathcal{M} \Delta[\T_i](y) \Big) \Big| \Delta  [g_i(x)] \Big| \,dy  \, dx \\
&\lesssim \Big(  \big|  \mathcal{M} \Delta [g_i]\big|_{L^2}   + \big|\mathcal{M} \Delta [\T_i]    \big|_{L^2}    \Big)|   \Delta [g_i]    |_{L^2} .
\end{aligned}
\end{equation}

All three estimates, \eqref{eq:aux_trajectory_diffenrence_4}, \eqref{eq:aux_trajectory_diffenrence_5}, and \eqref{eq:aux_trajectory_diffenrence_6} of $J_i$ are compatible with the bound \eqref{eq:aux_trajectory_diffenrence_3}, and hence the proposition is proved.

\end{proof}

\subsection{Proof of Theorem \ref{thm:uniqueness_by_flow}}

By the assumption on the flows, we have $  \g_i \in L^\infty(0,T; C^{1,\beta }(\TT))$. It follows that the right-hand side of the equations for the flows $\g_i$, the arc lengths $g_i$ \eqref{eq:metric_evolution}  and tangent $\T_i$ \eqref{eq:tangent_evolution} are all $L^2(\TT)$. Therefore $t \mapsto \delta(t)$ is continuous (in fact Lipschitz).

The stability estimate of Theorem \ref{thm:uniqueness_by_flow} follows immediately  once we combine Proposition \ref{prop:tangent_diffenrence} and Proposition \ref{prop:metric_diffenrence}

\bibliographystyle{alpha}
\bibliography{asqg_patch}

\end{document}